\numberwithin{equation}{section} 
\numberwithin{figure}{section} 
\theoremstyle{plain}
  \theoremstyle{plain}
  \newtheorem*{thm*}{Theorem}
  \theoremstyle{plain}
  \newtheorem*{lem*}{Lemma} 
 \theoremstyle{definition}
 \newtheorem*{defn*}{Definition}
  \theoremstyle{remark}
  \newtheorem*{rem*}{Remark}
\theoremstyle{plain}
\newtheorem{thm}{Theorem}[section]
  \theoremstyle{plain}
  \newtheorem{cor}[thm]{Corollary}
  \theoremstyle{plain}
  \newtheorem{lem}[thm]{Lemma}
  \theoremstyle{remark}
  \newtheorem*{claim*}{Claim}
  \theoremstyle{definition}
  \newtheorem*{problem*}{Problem}
  \theoremstyle{plain}
  \newtheorem{prop}[thm]{Proposition}
  \theoremstyle{plain} 
  \newtheorem{ex}[thm]{Example}
  \theoremstyle{plain}
  \newtheorem*{question*}{Question}
  \newcounter{casectr}
\newcommand{\N}{\mathbb N}
\newcommand{\Z}{\mathbb Z}
\begin{document}

\title{Applications of $p$-deficiency and $p$-largeness}

\author{J.\,O.\,Button and A.\,Thillaisundaram}
\maketitle

\section*{Introduction}

At the turn of the twentieth century, William Burnside posed the question:
can a finitely generated group which is torsion (meaning that
every element has finite order) be infinite? Initially many
thought the answer would be negative. Then in 1964, Golod and Shafarevich
\cite{key-Gol} gave such an example. Afterwards other constructions
were obtained by Adjan \&
Novikov \cite{key-AN}, Olshanski\u{\i} \cite{key-Ol}, Grigorchuk
\cite{key-GriGp}, and Gupta \& Sidki \cite{key-GS}, to name a few.
Very recently Schlage-Puchta \cite{key-19} gave a remarkably
straightforward proof of the existence of such groups by introducing
the concept of $p$-deficiency, which we review in Section 2.

Now little work has been done on what we call here the related
Burnside problem (see \cite{key-K}, Problem 8.52): do there exist
infinite finitely \emph{presented} torsion groups? Certainly, no conclusive
answer has been established.

We contribute to the related Burnside problem by proving that there
do not exist infinite finitely presented
torsion groups with $p$-deficiency greater than one. 
This follows from our main result:\\ 
For a group $G$ and a prime $p$, we say that
\begin{itemize}
\item $G$ is \emph{large} (as introduced in \cite{prd}) 
if some (without loss of generality normal) subgroup with
finite index admits a non-abelian free quotient; 
\item $G$ is \emph{$p$-large} (as introduced in \cite{key-Lac})
if some normal subgroup with index a power
of $p$ admits a non-abelian free quotient.
\end{itemize}
\begin{thm*}
For a finitely presented group $G$ with $p$-deficiency greater than
one, $G$ is $p$-large.
\end{thm*}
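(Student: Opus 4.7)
The plan is to combine two ingredients: a Reidemeister--Schreier calculation showing that $p$-deficiency scales up when passing to normal subgroups of $p$-power index, together with a criterion of Lackenby (from his paper on $p$-largeness) that detects $p$-largeness once a suitable presentation invariant becomes large enough.

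First, I would prove the key inequality: for a normal subgroup $H \trianglelefteq G$ of index $n = p^k$, Schreier rewriting yields a presentation of $H$ whose $p$-deficiency is at least $n \cdot \operatorname{def}_p(G)$. The Schreier generator count $n(|X_G|-1)+1$ and the bound $n|R_G|$ on the number of relators are classical. The refinement at the prime $p$ requires checking that if a relator $r$ of $G$ has $p$-valuation $\alpha_p(r)=\alpha$, then each of its $n$ Schreier rewrites in $H$ still has $p$-valuation at least $\alpha$. This follows from the fact that the $p$-central filtration defining $\alpha_p$ is invariant under conjugation in the ambient free group and is compatible with passage to the Schreier kernel. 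Summation then yields
\[
\operatorname{def}_p(H) \,\geq\, n(|X_G|-1) - n\sum_{r\in R_G} p^{-\alpha_p(r)} \,=\, n\cdot\operatorname{def}_p(G).
\]
Since the hypothesis gives $\operatorname{def}_p(G)>1$, the $p$-deficiency of $H$ grows without bound in $n$.

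Second, I would invoke Lackenby's criterion for $p$-largeness, which asserts that a finitely presented group admitting a normal subgroup of $p$-power index with sufficiently large presentation invariant (in particular, with $p$-deficiency exceeding an explicit threshold relative to its generator count) must be $p$-large. Choosing $n$ large in the previous step produces a normal subgroup $H$ of $G$ meeting any such threshold; Lackenby's theorem then yields a non-abelian free quotient of a normal subgroup of $p$-power index inside $H$, and composing with $[G:H]=p^k$ exhibits $G$ itself as $p$-large.

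The main obstacle is the first step, specifically the claim that $\alpha_p$ cannot drop under Schreier rewriting. The subtlety is that $\alpha_p(r)$ is defined via a specific $p$-central filtration of the ambient free group $F(X_G)$, whereas Schreier rewriting expresses the rewrites of $r$ inside a different free group on Schreier generators; one must carefully identify the $p$-central filtration on the latter with the restriction from the former. Once this algebraic compatibility is secured, the deficiency estimate is essentially bookkeeping, and the conclusion follows from Lackenby's criterion.
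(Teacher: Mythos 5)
Your overall strategy (scale the $p$-deficiency up along subgroups of $p$-power index, then appeal to Lackenby) has the same shape as the paper's proof, but both of your key steps contain genuine errors. First, the claim that $\alpha_p$ cannot drop under Schreier rewriting is false, and the justification via a ``$p$-central filtration'' misreads the definition: here $\alpha_p(r)=\nu_p(r)$ is the largest $k$ such that $r$ is an honest $p^k$-th power in $F(X_G)$, not a filtration degree, and the rewritten relator lives in a different free group where it need not be a proper power at all. Concretely, for $G=\langle x,t\mid t^p\rangle$ and $H$ the kernel of the map to $C_p$ sending $t\mapsto 1$, $x\mapsto 0$, the relator $t^p$ (valuation $1$) rewrites to the single Schreier generator $s=t^p$, which has valuation $0$. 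The correct mechanism (Schlage-Puchta's, quoted in Section 2 of the paper) is a dichotomy: if the $p^{\nu}$-th root of the relator lies in the kernel, all $p$ rewrites remain $p^{\nu}$-th powers; if not, the $p$ rewrites are conjugate in the Schreier free group, so only one relator is kept and its valuation may drop by exactly one. In both cases the relator's contribution is multiplied by $p$, which gives the multiplicativity $\operatorname{def}_p(H)-1=p^{k}\bigl(\operatorname{def}_p(G)-1\bigr)$ rather than your $\operatorname{def}_p(H)\ge p^{k}\operatorname{def}_p(G)$; note also that your displayed expression $n(|X_G|-1)-n\sum_r p^{-\alpha_p(r)}$ equals $n(\operatorname{def}_p(G)-1)$, not $n\operatorname{def}_p(G)$. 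The linear growth you want does survive, but only because $\operatorname{def}_p(G)-1>0$, and only via the conjugacy bookkeeping your argument omits.

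Second, the ``Lackenby criterion'' you invoke does not exist in the stated single-subgroup form. The relevant result (Theorem \ref{1.15}, i.e.\ Theorem 1.15 of \cite{key-Lac}) characterises $p$-largeness of a finitely presented group by the existence of an abelian $p$-series $G=G_1\trianglerighteq G_2\trianglerighteq\cdots$ with rapid descent, $\inf_i \mathrm{d}_p(G_i/G_{i+1})/[G:G_i]>0$; there is no threshold theorem saying that one normal subgroup of $p$-power index with large $\operatorname{def}_p$ forces $p$-largeness, and such a statement would essentially be the theorem you are proving (any value exceeding $1$ suffices), so citing it is circular. The missing work is precisely the paper's construction: descend to a subnormal subgroup $H$ of $p$-power index with $\operatorname{def}_p(H)-1\ge 1$, then iterate, taking $H_{i+1}$ to be the kernel of a surjection of $H_i$ onto $(C_p)^{[H:H_i]}$ --- possible because $\mathrm{d}_p(H_i)\ge\operatorname{def}_p(H_i)$ by \eqref{eq:KeyD} together with multiplicativity of $\operatorname{def}_p-1$ along subnormal subgroups of $p$-power index --- so that every ratio in the rapid-descent condition equals $1$; finally transfer $p$-largeness from $H$ back to $G$ (Lemma \ref{splg}). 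As written, your sketch neither constructs such a series nor cites a theorem that can dispense with one, and it never passes from $\operatorname{def}_p$ to the mod-$p$ homology ranks $\mathrm{d}_p$ that the criterion actually measures.
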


A corollary of this result is that the finitely generated infinite
$p$-groups constructed by Schlage-Puchta are never finitely presented
and do not have property (T).

Note that our main result runs parallel to the famous Baumslag-Pride
Theorem \cite{key-BS}: groups with at least two more generators than
relators (that is deficiency greater than one) are large. In this
proof, the authors show that for a group $G$ of deficiency greater
than one, when $n$ is sufficiently large, $G$ has an index $n$
normal subgroup $\overline{H}$ that surjects onto $F_{2}$. As a
consequence of the proof of this theorem
(taking $n$ to be a suitably large power of $p$), we have that groups
of deficiency greater than one are also $p$-large for all primes
$p$. Naturally, our interest lies in groups of deficiency at most one.

Further to the related Burnside problem, we present here various
applications of our main result. A significant application is to
Coxeter groups where all labels are powers of an odd prime $p$,
following up on work by Grigorchuk \cite{key-Gri} which considered
the case $p=2$.

We then consider presentations with $p$-deficiency greater than one
where the number of relators can be finite or infinite. We compare
these with Golod-Shafarevich presentations in \cite{key-Gol} and show
that for all primes at least 7, any presentation with $p$-deficiency
greater than one (or even equal to one as long as the usual 
deficiency is not equal to one) is Golod-Shafarevich. Moreover, although
this is not true in the case of the three exceptional primes, 
we show that there exists a finite index subgroup which is
Golod-Shafarevich.

In the last section
we show that groups having presentations with $p$-deficiency greater
than one are non amenable and discuss related results on having infinite
quotients that either have property (T) or which are amenable.  

This paper is organised as follows. We begin with a small section
that consists of some terminology and notation, as well as relevant
facts on $p$-groups. Next, we present our main result. Thirdly we give
a brief outline on consequences of $p$-largeness and then the
application to Coxeter groups is illustrated. The subsequent section
combines our concept of $p$-deficiency with Golod-Shafarevich groups.
Lastly, we consider amenability and property (T).
$\ $

\section{Some basics}

Throughout this paper, $p$ denotes a prime number, $\mathbb{N}$
denotes the set of natural numbers $\{1,2,\ldots\}$, and $\mathbb{F}_{p}$
is the field of $p$ elements.
A group $G$
is a \emph{$p$-group} if every element of $G$ has order a $p$th
power. (Note: $G$ can be finite or infinite.)

For a group $G$, with $X$ a set of elements in $G$, $\langle X\rangle$
denotes the subgroup of $G$ generated by $X$, 
and $\langle\langle X\rangle\rangle$
denotes the normal closure in $G$ of the subset $X$ of
$G$. We denote the free group on a set $X$ by $F(X)$ and the free group
of rank $r$ by $F_r$. For $x,y\in G$,
we write $[x,y]$ to mean $xyx^{-1}y^{-1}$.

A \emph{presentation} $\langle X|R\rangle$ for a group $G$ is a
set $X$, and $R$ a subset of $F(X)$, such that 
$G\cong\nicefrac{F(X)}{\langle\langle R\rangle\rangle}$.
The elements $x\in X$ are called \emph{generators} and $r\in R$
are called \emph{relators}. A group $G$ is 
\begin{itemize}
\item \emph{finitely generated} if there exists some presentation 
$\langle X|R\rangle$ of $G$ with $|X|<\infty$;
\item \emph{finitely presented} if there exists some presentation 
$\langle X|R\rangle$
of $G$ with both $|X|$ and $|R|$ finite.
\end{itemize}
For $H$ a proper subgroup of $G$, we say that $H$ is \emph{subnormal}
in $G$ if there exists a positive integer $n$ and a normal series
in $G$ such that\[
H=H_{n}\trianglelefteq H_{n-1}\trianglelefteq\ldots
\trianglelefteq H_{1}\trianglelefteq H_{0}=G.\]

For a finite $p$-group $P$, every finite index subgroup is subnormal.
In fact,
we can say more:
\begin{thm*}
\cite{key-Nil} Suppose that $P$ is a finite $p$-group.

a) Every normal subgroup $N\trianglelefteq P$ may be included into
some central series of $P$ with factors of order $p$.

b) Every subgroup $H\le P$ may be included into some subnormal series
of $P$ with factors of order $p$.\end{thm*}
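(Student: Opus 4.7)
The plan is to use two standard properties of finite $p$-groups: (i) the centre $Z(P)$ of any nontrivial finite $p$-group is nontrivial, and more strongly, every nontrivial normal subgroup $N \trianglelefteq P$ meets $Z(P)$ nontrivially (since $Z(P)$ acts trivially on $N$ by conjugation, making $N$ a union of conjugacy classes of $P$, and the class equation forces $|N \cap Z(P)| \equiv |N| \pmod p$); and (ii) the normalizer growth property, namely that every proper subgroup of a finite $p$-group is strictly smaller than its normalizer. Both parts then follow by straightforward induction.

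For part (a), I would build the central series in two halves, first from $1$ up to $N$ and then from $N$ up to $P$. For the lower half, induct on $|N|$. If $N \neq 1$, then $N \cap Z(P)$ is a nontrivial $p$-group, so I can choose a central element $z \in N$ of order $p$; setting $P_1 = \langle z\rangle$, this is normal in $P$, central in $P$, of order $p$, and contained in $N$. Applying the inductive hypothesis to $P/P_1$ with its normal subgroup $N/P_1$ gives a central series of $P/P_1$ passing through $N/P_1$ with factors of order $p$, which lifts to the desired series from $P_1$ up to $N$. For the upper half, run essentially the same argument on the $p$-group $P/N$: repeatedly pick a central element of order $p$ in the current quotient and lift to obtain the refining series $N \trianglelefteq P_{k+1} \trianglelefteq \cdots \trianglelefteq P_n = P$ with each factor of order $p$ and central in the appropriate quotient.

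For part (b), I would induct on the index $[P:H]$. The base case $[P:H] = 1$ is trivial. If $H$ is proper in $P$, the normalizer growth property gives $H \lneq N_P(H)$, so $N_P(H)/H$ is a nontrivial finite $p$-group. By Cauchy's theorem (or just picking a cyclic subgroup of order $p$ of the centre of $N_P(H)/H$), there is a subgroup of order $p$ in $N_P(H)/H$; pulling back produces $H_1$ with $H \trianglelefteq H_1 \le N_P(H)$ and $[H_1:H] = p$. Since $[P:H_1] < [P:H]$, the inductive hypothesis supplies a subnormal series from $H_1$ up to $P$ with factors of order $p$, and prepending $H \trianglelefteq H_1$ gives the series for $H$.

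The main obstacle is really only in part (a), where one needs to ensure not just that the constructed series is subnormal with factors of order $p$, but also that each factor is \emph{central} in the corresponding quotient; this is precisely what the observation that $N \cap Z(P) \neq 1$ (rather than merely $Z(P) \neq 1$) buys us, since it lets the same $z$ do double duty as a central element of $P$ lying inside $N$. Everything else is bookkeeping.
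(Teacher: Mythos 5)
Your argument is correct and is the standard one: part (a) via the fact that a nontrivial normal subgroup of a finite $p$-group meets the centre nontrivially (so the same central element of order $p$ serves both as the next term of the series and as an element of $N$), and part (b) via the normalizer condition together with Cauchy's theorem, both closed off by induction. The paper itself gives no proof of this statement -- it is quoted from Khukhro's book -- and your proof matches the usual textbook treatment of that result, so there is nothing to reconcile.
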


In an infinite group $G$ we have that any subnormal subgroup $H$ of
prime power index in $G$ contains a subgroup $N$ which is of prime power
index and normal in $G$ (for instance by considering the derived $p$-series
of $G$).

We note that if $G$ surjects onto a direct product
$C_{p}\times\ldots\times C_{p}$, then $G$ factors through 
$\nicefrac{G}{G^{p}G'}$.
In addition, we note that in a finite $p$-group $P$, $P^{p}P'$
is the Frattini subgroup, $\Phi(P)$, of $P$.
By definition, the Frattini subgroup of an arbitrary group is the
intersection of all maximal subgroups. Also the Frattini subgroup
is a characteristic subgroup and it consists of all non-generators
of the group: $x\in G$ is a \emph{non-generator} if whenever 
$x\in X\subseteq G$
and $\langle X\rangle=G$ then $\langle X\backslash\{x\}\rangle=G$.
$\ $

We finish this section with a very brief mention of properties (T),
($\tau$), and amenability. A finitely generated group $G$ has
\emph{property} (T) if every isometric action of $G$ on a Hilbert
space has a global fixed point. It has \emph{property} ($\tau$) if for
some (equivalently any) finite generating set $S$ for $G$, the set
of Cayley graphs $\mbox{Cay}(G/N,S)$ form an expander family, where $N$ varies
over all finite index normal subgroups of $G$. It is 
\emph{amenable} if there
exists a finitely additive, left invariant, probability measure. All
three properties are preserved under quotients, extensions, and
subgroups of finite index, whereas amenability is further preserved under
arbitrary subgroups. Property (T) implies ($\tau$), but not vice versa.
However an amenable group with (T) must be finite, as must a residually
finite, amenable group with ($\tau$).

\section{Groups with $p$-deficiency greater than one}

We now recall Schlage-Puchta's beautiful and straightforward construction
of infinite finitely generated $p$-groups as given in \cite{key-19}.

\begin{defn*}
Let $G$ be a finitely generated group and $\langle X|R\rangle$ a
presentation for $G$. Throughout this paper we always assume that
$X$ is finite, but we allow $R$ to be finite or infinite, in which case
we refer to $\langle X|R\rangle$ as a finite or infinite presentation
respectively.
The \emph{deficiency} of $G$ with presentation
$\langle X|R\rangle$ is\[
\text{def}(G;X,R)=|X|-|R|.\]
If $|R|=\infty$, then we define $\text{def}(G;X,R)$ to be $-\infty$.
\end{defn*}

If we take such a
presentation $G=\langle x_1,\ldots ,x_d|w_1,w_2,\ldots\rangle$ 
where the $w_j$ are elements of the free group $F_d$ 
then the Reidemeister-Schreier rewriting process (see \cite{key-Lyn})
will produce a presentation
of an index $i$ subgroup $H$ of $G$, for which there will
be $(d-1)i+1$ generators.
As for relators, each $w_j$ gives rise to $i$ relators in this presentation for
$H$, and these $i$ relators are
conjugate to each other in $F_d$. In particular if this presentation is finite,
whereupon $r$ relators for $G$ results in $ir$ for $H$,
we see that $\mbox{def}-1$ is multiplicative, 
that is for $H\leq_f G$ with index
$i$ we have a presentation $H\cong\langle Y|S\rangle$ with
$\mbox{def}(H;Y,S)-1=i(\mbox{def}(G;X,R)-1)$.

Now suppose that $G$ has a normal subgroup $H$ of index $p$ 
for some prime $p$, so that $H$ can be thought
of as the kernel of a homomorphism $\theta$ from $G$ onto $C_p$. We can 
assume on changing the presentation
that $\theta(x_1)=\ldots =\theta(x_{d-1})=0$ but $x_d$, which we 
henceforth rename
$t$, maps to 1. This is because not every generator is sent to zero, so
by reordering them we can assume that $x_d$ does not.
Moreover we can take $\theta(t)=1$ without changing the kernel.
Next we replace each of the first $d-1$ generators $x_i$ with 
$x_it^{-n_i}$ where $\theta(x_i)=n_i$ and we rewrite the new relators
in terms of the new generators. In particular the number of generators
and relators is unchanged.
The reason for changing the presentation in this way is that the 
Reidemeister-Schreier process now takes on a particularly simple
form. The generators of $H$ are $s:=t^p$ and $x_{i,j}:=t^jx_it^{-j}$
for $1\leq i\leq d-1$ and $0\leq j\leq p-1$, and the relators are
$t^jw_kt^{-j}$ for $1 \leq k\leq r$ (or for all $k\ge 1$ if $R$ is infinite)
and $0\leq j\leq p-1$ but written in
terms of the above generating set for $H$.   

Schlage-Puchta's construction rests on two easily verifiable but ingenious
observations. The first comes into play when one of the relators for $G$, 
let us say $w_1$ when written in terms of the new generators for $G$,
is a $p$th power of some other word $v\in F_d$. Let our homomorphism
from $G$ to $C_p$ now have domain $F_d$ (but we will still
name it $\theta$) by composing $\theta$ with the
natural map from $F_d$ to $G$. If $\theta(v)=0$ then it is clear
that rewriting $t^iw_1t^{-i}=(t^ivt^{-i})^p$ in terms of our generators
for $H$ yields the same result as rewriting $t^ivt^{-i}$ in terms of these
generators and raising to the power $p$. In particular one $p$th power
relator in the presentation for $G$ results in $p$ $p$th power relators
in that for $H$. This cannot be true if 
$\theta(v)\neq 0$ because then $t^ivt^{-i}$ is not in the kernel of
$\theta$ and so cannot be rewritten in terms of these generators. But
it can seen that the
$(t^ivt^{-i})^p$ are conjugate in $F_{(d-1)p+1}$ when written in terms
of the $(d-1)p+1$ generators for $H$, because on increasing $i$
by one during rewriting, a cyclic permutation of the previous rewritten
word is produced. As conjugates of a given relator can be ignored,
in this situation we can say that a $p$th power
has produced a single relator in the presentation for $H$, not 
$p$ separate relators. We refer to the above process as
\emph{Puchta rewriting}.

The second crucial observation is to quantify the above in such a way
that it can work for presentations with infinitely many relators.  
\begin{defn*}\cite{key-19}
For a prime $p$, the \emph{$p$-deficiency} of $G$ with presentation
$\langle X|R\rangle$ is\[
\text{def}_{p}(G;X,R)=|X|-\sum_{r\in R}p^{-\nu_{p}(r)},\]
where $\nu_{p}(r)=\max\left\{ k\ |\ \exists 
w\in F(X),\ w^{p^{k}}=r\right\} \ge0$. \\
If $|R|=\infty$ and the sum does not converge then we define
$\mbox{def}_p(G;X,R)$ to be $-\infty$, although we can certainly have
cases where $|R|=\infty$ but $\mbox{def}_p(G;X,R)$ is finite.

Note: it is always the case that $\text{def}_{p}(G;X,R)\ge\text{def}(G;X,R)$
and if $\nu_{p}(r)=0$ for all $r\in R$, then 
$\text{def}_{p}(G;X,R)=\text{def}(G;X,R)$.
\end{defn*}

From the above we see that 
Schlage-Puchta has proved a multiplicative property for
$\mbox{def}_p-1$ similar to that for $\mbox{def}-1$. 
That is, if $H\le_{p^{k}}G$ and $H$ is subnormal
in $G$ then there exists $Y$ and $S$ with $H\cong\langle Y|S\rangle$ such that 
$\text{def}_{p}(H;Y,S)-1=p^{k}\left(\text{def}_{p}(G;X,R)-1\right)$.

For ease of notation, we shall in future write $\text{def}_{p}(G)$
and $\text{def}(G)$ instead of $\text{def}_{p}(G;X,R)$ and $\text{def}(G;X,R)$
respectively as we will always have a specific presentation in mind, even
though these quantities can vary drastically over all presentations defining
$G$.
\begin{defn*}
The {\it $p$-rank} $\text{d}_{p}(G)$ of a finitely generated group $G$ 
is the dimension of the homology group $H_{1}(G;\mathbb{F}_{p})$.
Equivalently it is the rank (here meaning the number
of generators) of $\nicefrac{G}{G'G^{p}}$ and can be thought of as
the maximum number of copies of $C_{p}$ onto which
$G$ surjects: $G\twoheadrightarrow C_{p}\times\ldots\times C_{p}$.
\end{defn*}

For a presentation $G=\langle X|R\rangle$ with $d=|X|$ but where $R$ is either
infinite or finite (with ranges of sums adjusted accordingly),
suppose exactly $s$ of the relators in $R$ are
not powers of $p$ - that is, $y\in R$ cannot be expressed as $v^{p}$
for some $v\in F(X)$. If $\mbox{def}_p(G)$ is finite then 
$s$ must be finite too.
Our presentation for $G$ thus can be written in the form \[
\langle x_{1},\ldots,x_{d}|w_1,\ldots ,w_s,
w_{s+1}^{p^{b_{s+1}}},w_{s+2}^{p^{b{}_{s+2}}},\ldots\rangle,\]
for $w_{i}\in F(X)$ where $b_{s+1},b_{s+2},\ldots\ge 1$.
Referring to our definition of $\text{d}_{p}(G)$, it is natural to
consider $\nicefrac{G}{G'G^{p}}$ in our next step. 
In additive notation, assuming all generators commute, we have

\[
\frac{G}{G'G^{p}}=\langle x_{1},\ldots,x_{d}\ |\ pw=0\ \forall w\in F(X),
\ w_{1},\ldots,w_{s}\rangle\]
where, for $1\le i\le s$, we can express $w_{i}=a_{i1}x_{1}+\ldots+a_{id}x_{d}$
with $a_{ij}\in\mathbb{F}_{p}$ for $j=1,\ldots,d$. As $G$ is finitely
generated, $\nicefrac{G}{G'G^{p}}$ is a finite abelian group of exponent
$p$ and so can be viewed as a vector space
over $\mathbb{F}_{p}$, with $d$ variables (the generators) and $s$
not necessarily linearly independent equations (the relators). Thus,
as a vector space, $\text{dim}\left(\nicefrac{G}{G'G^{p}}\right)\ge d-s$.
So we establish the inequality\[
\text{d}_{p}(G)\ge d-s.\]
Now, by definition, $\text{def}_{p}(G)=|X|-s-\sum_{i=s+1}^{\infty}p^{-b_{i}}
\le d-s$. From the above, we make
the key deduction:\begin{equation} \label{eq:KeyD}
\text{d}_{p}(G)\ge\text{def}_{p}(G).\end{equation}
This has an important consequence.
\begin{cor}\label{coinf} 
If there exists a presentation for the finitely
generated group $G$ with $\mbox{def}_p(G)\geq 1$ then $G$ is infinite.
\end{cor}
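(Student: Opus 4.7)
The plan is to argue by contradiction using the two ingredients just established: the inequality $\text{d}_p(G) \geq \text{def}_p(G)$ from (\ref{eq:KeyD}), together with the multiplicativity of $\text{def}_p - 1$ under passage to subnormal subgroups of $p$-power index. Assume that $G$ is finite and admits a presentation $\langle X|R\rangle$ with $\text{def}_p(G) \geq 1$.

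Since $\text{d}_p(G) \geq \text{def}_p(G) \geq 1$, the quotient $G/G'G^p$ is nontrivial and so $G$ surjects onto $C_p$. Let $H_1$ be the kernel of such a surjection, a normal subgroup of $G$ of index $p$. The multiplicativity of $\text{def}_p - 1$ then produces a presentation $\langle Y_1|S_1\rangle$ for $H_1$ with $\text{def}_p(H_1) - 1 = p(\text{def}_p(G) - 1) \geq 0$, so $\text{def}_p(H_1) \geq 1$.

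Iterate this construction: given a presentation of $H_n$ with $\text{def}_p(H_n) \geq 1$, the same argument produces a normal subgroup $H_{n+1}$ of $H_n$ of index $p$ admitting a presentation with $\text{def}_p(H_{n+1}) \geq 1$. This yields a strictly descending chain $G > H_1 > H_2 > \cdots$ with $[G : H_n] = p^n$ for all $n$, contradicting the finiteness of $G$.

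The only point to verify is that the multiplicativity can legitimately be invoked at each iteration. At step $n$ the subgroup $H_{n+1}$ is normal in $H_n$, hence subnormal of $p$-power index there, which is precisely the hypothesis under which the multiplicativity was stated, so no difficulty arises. Note that the argument covers the boundary case $\text{def}_p(G) = 1$ without modification: the factor of $p$ in the recursion then preserves $\text{def}_p = 1$ along the entire chain, which is still enough to continue the descent. I do not expect this to be the hard part of the paper; the real work lies ahead, in leveraging $\text{def}_p > 1$ to obtain $p$-largeness rather than mere infinitude.
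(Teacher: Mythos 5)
Your argument is correct and is essentially the paper's own proof: the key inequality \eqref{eq:KeyD} gives a surjection onto $C_p$, multiplicativity of $\text{def}_p-1$ keeps the $p$-deficiency at least $1$ for the kernel, and iterating produces subgroups of index $p^n$ for every $n$, which is incompatible with $G$ being finite. The only cosmetic difference is that you phrase the endless descent as a contradiction with finiteness, while the paper simply notes the iteration can continue indefinitely.
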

\begin{proof}
From \eqref{eq:KeyD} we see that $\mbox{d}_p(G)\geq 1$, so that there exists
$H\unlhd G$ with index $p$. But multiplicity of $p$-deficiency implies
that $\mbox{def}_p(H)\ge 1$ so we can iterate indefinitely.
\end{proof}

Now in order to create finitely generated infinite $p$-groups,
Schlage-Puchta merely takes a finite generating set $X$ with $|X|\geq 2$
and an enumeration $w_1,w_2,\ldots$ of all elements in $F(X)-\{\mbox{id}\}$.
On forming the presentation $G=\langle X|w_1^{p^{n_1}}, w_2^{p^{n_2}},\ldots
\rangle$, we see that $G$ is clearly a $p$-group and is infinite if
$\sum_{i=1}^{\infty}1/p^{n_i}\le |X|-1$ by Corollary \ref{coinf}.
He also shows that $G$ is
non amenable if the sum is less than $|X|-1$ using rank gradient, which
will be discussed in Section 6.

The motivation for our main result in this section
is the related Burnside problem. As $p$-deficiency provides such a natural
way of constructing infinite finitely generated torsion groups, an obvious
question to ask is whether it can be adapted to produce examples which are
finitely presented (see Corollary \ref{nfp} below). 
However if $G$ is large then it certainly is not an infinite torsion group.
$\ $

The following is the goal of this section.
\begin{thm}
\label{thm:Main} (Main Result) For a finitely presented group $G$
with $\text{def}_{p}(G)>1$, $G$ is $p$-large.\end{thm}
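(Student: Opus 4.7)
The plan is to reduce to the Baumslag--Pride theorem. Since Baumslag--Pride (as noted in the introduction) can be applied with the relevant index a power of $p$, it suffices to exhibit a $p$-power index subnormal subgroup $H \le G$ admitting a finite presentation with ordinary deficiency greater than one. This will produce a $p$-power index subgroup of $H$ surjecting onto $F_2$; an intersection-of-$G$-conjugates argument (using that a finite intersection of subgroups of $p$-power index is still of $p$-power index, and that a finite-index subgroup of $F_2$ is free of rank $\ge 2$ and hence surjects onto $F_2$) then promotes this to a normal $p$-power index subgroup of $G$ with $F_2$ as quotient, yielding $p$-largeness of $G$.

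To produce such an $H$, I would use the multiplicativity proved in Section 2. Setting $\epsilon = \text{def}_p(G) - 1 > 0$, for each $k$ there is a subnormal $H_k \le G$ of index $p^k$ with $\text{def}_p(H_k) = 1 + p^k \epsilon$ and a Puchta-rewritten presentation on $1+p^k(|X|-1)$ generators. The decisive observation from Section 2 is that if $v^{p^a}$ is a $p$-power relator in the current group and the chosen quotient $\theta$ to $C_p$ satisfies $\theta(v) \neq 0$, then $v^p \in \ker\theta$ and the $p$ conjugates $t^i v^{p^a} t^{-i}$ collapse to a single relator of the subgroup equal to $(v^p)^{p^{a-1}}$; hence one Puchta step with $\theta(v) \neq 0$ decreases the $p$-depth of that relator by exactly $1$, while $\theta(v)=0$ leaves it at depth $a$ as $p$ conjugate copies. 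Therefore, if at each of the first $\max_j a_j$ Puchta steps one can find a homomorphism to $C_p$ that is nonzero on every current $p$-power relator base, then after these steps every relator has depth $0$; the resulting $H_k$ is ordinarily finitely presented with $\text{def}(H_k) = \text{def}_p(H_k) = 1 + p^k \epsilon > 1$, and Baumslag--Pride finishes the argument.

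The technical heart, and the main obstacle I anticipate, is the induction justifying the existence of such a $\theta$ at every stage. At stage $i$ one must show that the hyperplanes $\{\theta : \theta(v) = 0\}$ cut out in $\mathrm{Hom}(H_i, \mathbb{F}_p)$ by the current $p$-power relator bases $v$ do not cover the whole space. The rapid growth $d_p(H_i) \ge 1 + p^i \epsilon$ provides ample room in the dual space, but one has to analyse carefully how bases of $p$-power relators behave under Puchta rewriting: when a base lies in the Frattini subgroup of the current group it is invisible to any map to $C_p$, and one must descend further so that it escapes the (ever-changing) Frattini subgroup. Verifying this, and bounding the number of descents required, is the delicate combinatorial step underlying the proof.
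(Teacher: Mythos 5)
Your argument is incomplete at precisely the step on which everything rests: the claim that at each stage one can choose the map to $C_p$ to be nonzero on every current base of a $p$-power relator, so that after finitely many Puchta steps all relators have depth $0$ and one lands in a finite presentation of ordinary deficiency greater than one. You state this only as a conditional, and the condition can fail outright. Take $G=\langle x_1,x_2,x_3,x_4\mid v^{p}\rangle$ with $v=[[x_1,x_2],[x_3,x_4]]$, so $\mathrm{def}_p(G)\ge 4-\frac1p>1$ and $\max_j a_j=1$. Here $v$ (and any root of it, since roots of elements of $F'$ lie in $F'$ because $F/F'$ is torsion-free) is killed by every homomorphism to $C_p$, so no admissible $\theta$ exists at the very first step; and as long as the subgroups in your series contain $G'$, the conjugates of $v$ remain commutators of two elements of the current group and hence stay invisible to every map to $C_p$, so ``descending until the base escapes the Frattini subgroup'' is an unquantified assertion that needs an argument you have not given. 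Even when all bases are visible there is a second obstruction: you need a single $\theta$ avoiding all the annihilator hyperplanes simultaneously, but as few as $p+1$ distinct hyperplanes already cover $\mathrm{Hom}(H_i,\mathbb{F}_p)$ whatever its dimension, while the number of unresolved bases multiplies by $p$ at each step, so the growth $\mathrm{d}_p(H_i)\ge 1+p^i\varepsilon$ does not by itself supply the room you need. Finally, the intermediate statement you reduce to --- a subnormal subgroup of $p$-power index admitting a finite presentation of ordinary deficiency greater than one --- is stronger than the theorem ($p$-largeness does not imply virtually positive deficiency) and is not known to follow from $\mathrm{def}_p(G)>1$.

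The paper sidesteps all of this: it never attempts to lower the depths $\nu_p$ of the relators. It invokes Lackenby's criterion (Theorem \ref{1.15}) that a finitely presented group is $p$-large if and only if it has an abelian $p$-series with rapid descent, and it builds such a series using only the two facts you already use, namely multiplicativity of $\mathrm{def}_p-1$ under Puchta rewriting along subnormal subgroups of $p$-power index and the inequality $\mathrm{d}_p\ge\mathrm{def}_p$: after descending until the excess $p$-deficiency is at least $1$, each new term is the kernel of a surjection onto a direct power of $C_p$ of rank $\mathrm{d}_p$, and one checks the ratio $\mathrm{d}_p(H_i/H_{i+1})/[H:H_i]$ stays bounded away from $0$. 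No choice of $\theta$ adapted to the relator bases is ever required, which is exactly why that route closes while yours, as it stands, has a genuine gap; salvaging the Baumslag--Pride reduction would require proving your depth-reduction claim, which looks at least as hard as the theorem itself.
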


\begin{proof}
The following is a vital tool that we will need.

$\ $

\begin{thm}
\label{1.15} (\cite{key-Lac} Theorem 1.15)
Let $G$ be a finitely presented group,
and let $p$ be a prime. Then the following are equivalent.

(1) $G$ is $p$-large;

(2) $G$ has an abelian $p$-series with rapid descent.
\end{thm}

$\ $

We use Schlage-Puchta's multiplicity result to show that the
group $G$ of Theorem \ref{thm:Main} has an abelian $p$-series of
rapid descent. Then we apply Theorem \ref{1.15}.

$\ $

\begin{defn*}
\cite{key-Lac} An \emph{abelian $p$-series }for a group $G$ is
a sequence of finite index subgroups \[
G=G_{1}\trianglerighteq G_{2}\trianglerighteq\ldots\]
such that $\nicefrac{G_{i}}{G_{i+1}}$ is an elementary abelian $p$-group
for each natural number $i$.

An abelian $p$-series $\left\{ G_{i}\right\} $ has \emph{rapid descent}
if\[
\inf_{i}\dfrac{\text{d}_{p}(\nicefrac{G_{i}}{G_{i+1}})}
{\left[G:G_{i}\right]}>0.\]

\end{defn*}

\begin{rem*}
The equivalence of Theorem \ref{1.15} enables us to conclude that
for a $p$-large group $H$ that is subnormal of $p$th power index
in a supergroup $G$, then $G$ is $p$-large. This is apparent from
the fact that an abelian $p$-series of $H$ with rapid descent can
be extended to an abelian $p$-series of $G$, still with rapid descent.
We also give a direct proof in Lemma \ref{splg}.
\end{rem*}
$\ $ 

Having laid out all the machinery that is required, we construct the
required abelian $p$-series \[
G=G_{0}\trianglerighteq G_{1}\trianglerighteq G_{2}\trianglerighteq\ldots\]
as follows. Henceforth, $G\cong\langle X|R\rangle$ is a finitely
presented group with $\text{def}_{p}(G)>1$, as in the hypothesis
of Theorem \ref{thm:Main}.

We assume that $|X|\ge2$, as if $|X|=1$ then our group $G$ is cyclic
and we cannot possibly have $\text{def}_{p}(G)>1$.
Moreover we know from \eqref{eq:KeyD} that $\text{d}_{p}(G)>1$, 
which means a surjection $G\twoheadrightarrow C_{p}$ exists. 
Therefore, $\exists G_{1}\trianglelefteq_{p}G$.

$\ $

We set $G_{0}:=G$. As $\text{def}_{p}(G_{0})-1=\varepsilon>0$,
Puchta rewriting means that
\[\text{def}_{p}(G_{1})-1=p\left(\text{def}_{p}
\left(G_{0}\right)-1\right)=p\varepsilon.\]
 If $p\varepsilon\ge1$, then we advance to the next paragraph. Else,
proceed down the series to get\[
G_{n}\trianglelefteq_{p}G_{n-1}\trianglelefteq_{p}\ldots
\trianglelefteq_{p}G_{1}\trianglelefteq_{p}G_{0}\]
with $\text{def}_{p}(G_{n})-1=p^{n}\varepsilon\ge1$. The formation
of such a series is possible since 
$\text{d}_{p}(G_{i})\ge\text{def}_{p}(G_{i})>1$ for each $G_i$.
We have $\left[G:G_{n}\right]=p^{n}$.

$\ $

A new abelian $p$-series is initiated from $H:=G_{n}$ (or $G_{1}$
if $p\varepsilon\ge1$). We show instead that $H$ is $p$-large,
which implies that $G$ is $p$-large.

Let $H_{1}\trianglelefteq_{p}H$ (which exists by a similar argument
as to $G_{1}\trianglelefteq_{p}G$). 
Then $\text{def}_{p}(H_{1})-1=p\left(\text{def}_{p}(H)-1\right)\ge p$. 

So $\text{d}_{p}(H_{1})\ge p+1$, but `$\text{d}_{p}(H_{1})\ge p$'
suffices for our proof, and hence 
$H_{1}\twoheadrightarrow\underbrace{C_{p}\times\ldots\times C_{p}}$.

$\qquad\qquad\qquad\qquad\qquad\qquad\qquad\qquad
\qquad\qquad\qquad\qquad\qquad\qquad\qquad\qquad\qquad\quad p\text{ times}$

Set $H_{2}:=\ker\left(H_{1}\twoheadrightarrow(C_{p})^{p}\right)$.
Then $\nicefrac{H_{1}}{H_{2}}\cong\underbrace{C_{p}\times\ldots\times C_{p}}$
, so $|\nicefrac{H_{1}}{H_{2}}|=p^{p}$, and 
 $\text{d}_{p}(\nicefrac{H_{1}}{H_{2}})=p$.

$\qquad\qquad\qquad\qquad\qquad\qquad\qquad\qquad\qquad\qquad\ p\text{ times}$

We compute that

\[
\dfrac{\text{d}_{p}(\nicefrac{H_{1}}{H_{2}})}
{\left[H:H_{1}\right]}=\dfrac{p}{p}=1.\]
Following the same line of thought,

\[
\text{def}_{p}(H_{2})-1=p^{p}\left(\text{def}_{p}
(H_{1})-1\right)\ge p^{p+1}\ \Longrightarrow\ \text{d}_{p}(H_{2})\ge p^{p+1}.\]
Similarly, $H_{3}:=\ker\left(H_{2}\twoheadrightarrow
\left(C_{p}\right)^{p^{p+1}}\right)$.
So $|\nicefrac{H_{2}}{H_{3}}|=p^{p^{p+1}}$, and hence 
$\text{d}_{p}(\nicefrac{H_{2}}{H_{3}})=p^{p+1}$.

$\ $

The next fraction gives us the same value.

\[
\frac{\text{d}_{p}(\nicefrac{H_{2}}{H_{3}})}
{[H:H_{2}]}=\frac{p^{p+1}}{p^{p+1}}=1\]
Proceeding in this manner, we have ensured that\[
\inf_{i}\dfrac{\text{d}_{p}(\nicefrac{H_{i}}{H_{i+1}})}
{\left[H:H_{i}\right]}=1>0.\]
\end{proof}

$\ $

As the reader might have noted, a recursive formula for $\text{d}_{p}
(\nicefrac{H_{i}}{H_{i+1}})(=\left[H:H_{i}\right])$
is\[
\text{d}_{p}(\nicefrac{H_{i}}{H_{i+1}})=p^{\text{d}_{p}\left
(\nicefrac{H_{i-1}}{H_{i}}\right)}\cdot\text{d}_{p}(
\nicefrac{H_{i-1}}{H_{i}})\]
\[
=p^{\left[H:H_{i-1}\right]}\cdot\left[H:H_{i-1}\right].\]
$\ $ 

It is worth pointing out that this brings to light a whole new collection
of large groups with finite negative deficiency. For example,
on taking one's favourite prime $p$,
the 2-generator group\[
\langle x,y|w_{1}^{p},\ldots,w_{p-1}^{p}\rangle\]
is $p$-large, regardless of which words
$w_{1},\ldots,w_{p-1}\in F(x,y)$ are chosen.

$\ $
\begin{rem*} The proof of Theorem \ref{thm:Main} works if the given
presentation for $G$ is either finite or infinite. However, in the latter
case, suppose the presentation for $G$ is $\langle x_1,\ldots
,x_d|r_1,r_2,\ldots\rangle$. By \cite{nbh} Corollary 12 of the classic
1937 paper by B.\,H.\,Neumann, if $G$ has some finite presentation
then there exists $l$ with $G=\langle x_1,\ldots ,x_d|r_1,\ldots ,
r_l\rangle$. But as this is a truncation of a presentation with
$p$-deficiency greater than 1, we obtain a finite presentation for $G$
with $p$-deficiency greater than 1 anyway. This also applies if we have an
infinite presentation for $G$ which has $p$-deficiency exactly 1.
\end{rem*}
We have two corollaries, the first of which is immediate from
Theorem \ref{thm:Main} and this remark.
\begin{cor}\label{nfp}
There do not exist infinite finitely presented torsion groups which,
for some prime $p$, have a presentation with $p$-deficiency greater
than 1 or an infinite presentation with $p$-deficiency equal to 1.
\end{cor}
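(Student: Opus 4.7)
The plan is to reduce both clauses of the hypothesis to the finite-presentation setting of Theorem \ref{thm:Main}, then apply that theorem to produce $p$-largeness, and finally derive a contradiction with the torsion hypothesis.

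First I would carry out the reduction. If $G$ already admits a finite presentation with $\text{def}_{p}(G)>1$, then there is nothing to do and we may invoke Theorem \ref{thm:Main} directly. Otherwise we start from a presentation $\langle x_1,\ldots,x_d \,|\, r_1, r_2, \ldots\rangle$ of $G$ which is either infinite with $\text{def}_p>1$ or infinite with $\text{def}_p=1$, and invoke the assumption that $G$ is finitely presented. The Neumann truncation argument recalled in the preceding remark furnishes some finite $l$ for which $\langle x_1,\ldots,x_d \,|\, r_1,\ldots,r_l\rangle$ is also a presentation of $G$. Its $p$-deficiency equals $|X|-\sum_{i=1}^{l}p^{-\nu_p(r_i)}$, which strictly exceeds the original value $|X|-\sum_{i=1}^{\infty}p^{-\nu_p(r_i)}$ since the discarded tail is a sum of strictly positive terms $p^{-\nu_p(r_i)}$ and infinitely many of these are being removed. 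Consequently the truncated finite presentation satisfies $\text{def}_p>1$ in both subcases.

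Next I would apply Theorem \ref{thm:Main} to this finite presentation, concluding that $G$ is $p$-large. By definition this yields a normal subgroup $N\trianglelefteq G$ of $p$-power index together with a surjection $N\twoheadrightarrow F_2$. The torsion hypothesis now closes the argument: since $G$ is torsion, so is the subgroup $N$, and therefore every quotient of $N$ is torsion as well; but $F_2$ contains elements of infinite order, a contradiction.

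The only point that demands any verification is the strict passage from $\text{def}_p=1$ in the original infinite presentation to $\text{def}_p>1$ in the Neumann truncation, since it is precisely this strict inequality that triggers Theorem \ref{thm:Main}. This is however immediate from the fact that dropping any positive-valued summand of the form $p^{-\nu_p(r_i)}$ strictly decreases the total. Once this is noted, the rest of the proof is essentially by inspection: Theorem \ref{thm:Main} does the real work, and the incompatibility of $p$-largeness with being a torsion group is evident from the non-torsion nature of $F_2$.
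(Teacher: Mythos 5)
Your proposal is correct and follows essentially the same route as the paper: the Neumann truncation argument (exactly as in the paper's preceding remark) reduces to a finite presentation whose $p$-deficiency strictly increases past $1$, Theorem \ref{thm:Main} then gives $p$-largeness, and the surjection of a finite-index normal subgroup onto $F_2$ is incompatible with $G$ being torsion. No gaps.
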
 

In particular the infinite finitely generated $p$-groups constructed by
Schlage-Puchta are definitely not finitely presented.

Also we can show that these groups do not have property (T).

\begin{cor} \label{pnot}
A group possessing a finite presentation with $p$-deficiency greater
than 1, or an infinite presentation with $p$-deficiency at least
1, does not have property (T).
\end{cor}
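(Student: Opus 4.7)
The strategy is to extract a non-abelian free quotient of a finite-index subgroup via Theorem \ref{thm:Main} and then invoke the permanence of property (T) under passage to finite-index subgroups and to quotients. Concretely, in the case that $G$ has a finite presentation with $\text{def}_p(G) > 1$, Theorem \ref{thm:Main} yields at once that $G$ is $p$-large, so some normal subgroup $N \trianglelefteq G$ of $p$-power index admits a surjection $N \twoheadrightarrow F_2$. If $G$ had property (T), then $N$ would as well (finite index) and hence so would $F_2$ (quotient). But $F_2$ surjects onto $\mathbb{Z}$, which is a finitely generated infinite amenable group, whereas an amenable group with (T) must be finite by the dichotomy recorded in Section 1. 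Contradiction.

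For an infinite presentation of $G$ with $\text{def}_p(G) \geq 1$, I would reduce to the finite-presentation case via the remark immediately following the proof of Theorem \ref{thm:Main}. By B.\,H.\,Neumann's Corollary 12, if $G$ is finitely presented then any infinite presentation of $G$ on the same generating set admits a finite subpresentation that still presents $G$; since deletion of non-trivial relators strictly raises $p$-deficiency (each $r$ contributes $p^{-\nu_p(r)} > 0$ to the subtracted sum), one recovers a finite presentation of $G$ with $\text{def}_p > 1$, and the previous case applies.

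The part requiring the most care is that this reduction tacitly uses $G$ being finitely presented, which is not automatic from property (T). To close the remaining gap I would observe that the construction inside the proof of Theorem \ref{thm:Main} relies only on the multiplicativity of $\text{def}_p - 1$, which is available for infinite presentations too whenever $\text{def}_p(G) > 1$, so it produces an abelian $p$-series of $G$ with rapid descent independently of finite presentability. A rapid descent abelian $p$-series along a chain of finite-index normal subgroups obstructs property ($\tau$) with respect to that chain (via the standard relation between rapid descent and failure of ($\tau$) underlying Theorem \ref{1.15}), and since (T) implies ($\tau$), this once again rules out (T) in the remaining cases.
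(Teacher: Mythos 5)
Your first case (finite presentation with $\mbox{def}_p(G)>1$) is fine and is exactly the paper's route: Theorem \ref{thm:Main} gives $p$-largeness, and a large group cannot have (T) since (T) passes to finite index subgroups and quotients while $F_2$ fails it. The genuine gap is in the infinite-presentation case. You correctly notice that the B.\,H.\,Neumann truncation argument presupposes that $G$ is finitely presentable, which is not given; but your fallback does not close this. It rests on the assertion that a rapid-descent abelian $p$-series for a finitely \emph{generated} group obstructs property ($\tau$) with respect to that chain, citing ``the standard relation underlying Theorem \ref{1.15}''. Theorem \ref{1.15} is stated only for finitely presented groups, and its substantive direction (rapid descent implies $p$-large) genuinely needs finite presentability: the Schlage-Puchta torsion groups have rapid-descent series (the paper's remark that the proof of Theorem \ref{thm:Main} works for infinite presentations) yet are not large, so the hypothesis is doing real work. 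No result is proved or cited, here or in \cite{key-Lac}, saying that rapid descent alone kills ($\tau$); indeed in Section 6 the authors obtain failure of ($\tau$) for certain Puchta groups only indirectly, via infinite amenable residually finite quotients, which signals that no such direct argument was available to them. As written, this step is an unproved claim, not a citation. Moreover, even granting it, your argument only covers $\mbox{def}_p(G)>1$: the corollary also asserts the conclusion for infinite presentations with $\mbox{def}_p(G)$ exactly $1$, where multiplicativity gives $\mbox{def}_p(H)-1=p^k\cdot 0=0$ and the construction produces no rapid descent at all.

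The missing idea is the one the paper actually uses: Shalom's theorem \cite{sha} that property (T) for a finitely generated group is witnessed by finitely many relations. If $G=\langle x_1,\ldots,x_d\,|\,r_1,r_2,\ldots\rangle$ (infinite presentation, $\mbox{def}_p\ge 1$) had (T), then for some $l$ the truncation $G_l=\langle x_1,\ldots,x_d\,|\,r_1,\ldots,r_l\rangle$ would have (T). But $G_l$ is finitely presented with $p$-deficiency strictly greater than $1$ (the infinitely many discarded relators contribute a strictly positive amount to the subtracted sum), so by Theorem \ref{thm:Main} and your own Case 1 argument $G_l$ cannot have (T) --- a contradiction. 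Note that merely knowing the truncations lack (T) says nothing about $G$ directly, since (T) passes to quotients and $G$ is a quotient of each $G_l$; it is precisely Shalom's converse-type statement that makes the truncation argument legitimate, with no finite presentability of $G$ needed and with the $\mbox{def}_p=1$ case handled automatically.
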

\begin{proof} We may assume the presentation is infinite.
By a result \cite{sha} of Shalom, if a finitely generated group has property
(T) then only finitely many relations suffice to confirm this. But as in
the previous corollary, such a finite presentation defines a large group
which cannot have (T).
\end{proof}

\begin{rem*}   
Given an infinite presentation
$G=\langle x_1,\ldots,x_d|r_1,r_2,\ldots\rangle$ as above, define
$G_l=\langle x_1,\ldots,x_d|$  $r_1,r_2,\ldots ,r_l\rangle$. Suppose that
P is a group theoretic property that is preserved by prequotients. We can
then ask: if $G_l$ has P for all $l$ then does $G$ have P? We know that
this is true for being infinite and not having (T). It also holds
for being non abelian, non nilpotent, and having first Betti number or
$p$-rank at least $k$. But by the above it is false for any property
implied by largeness but not held by torsion groups. This includes, for
instance, having virtual first Betti number at least $k$, or containing a
non abelian free group or an element of infinite order.

Two other examples of this phenomenon are in \cite{gdlhlb}
where it is shown that
the groups formed by truncating the standard presentation of the
Grigorchuk group are all large, as well as \cite{bmbk} Chapter IV Theorem 7
which does
the same for the restricted wreath product $\Z\wr\Z$. In particular,
the above is false for the property of being non amenable, or even
being non soluble.
\end{rem*}

\section{Properties of $p$-largeness}

Largeness was introduced in \cite{prd} as an important property of
finitely generated groups which is invariant under finite index subgroups
and supergroups, as well as prequotients. We now look at the appropriate
properties of $p$-largeness. First it is clear that $p$-largeness is
preserved by prequotients because the index and normality of a
subgroup are preserved under the inverse image of a homomorphism.
However a perfect large group, such as $A_5*A_5$, cannot be $p$-large
for any $p$ because of the following. Here we write $H\leq_f G$ for
$H$ a finite index subgroup of $G$ and $N\unlhd_{p^k}G$ when $N$ is a
normal subgroup with index a power of $p$.
\begin{lem} If the $p$-rank $d_p(G)\leq 1$ then $G$ is not $p$-large.
\label{frat}\end{lem}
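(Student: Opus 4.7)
The plan is to argue the contrapositive: assume $G$ is $p$-large and deduce $\text{d}_{p}(G) \geq 2$. By definition there exists $N \unlhd_{p^{k}} G$ together with a surjection $N \twoheadrightarrow F_{2}$. The abelianisation of $F_{2}$ reduced mod $p$ has dimension $2$, and $\text{d}_{p}$ is monotone under quotients (generators of a group map to generators of a quotient, and this passes to the elementary abelian $p$-quotient), so $\text{d}_{p}(N) \geq 2$.

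The decisive move is to pass to the quotient of $G$ by $K := N^{p} N'$. This subgroup is characteristic in $N$, hence normal in $G$ since $N$ itself is normal in $G$. By construction $N/K$ is the maximal elementary abelian $p$-quotient of $N$, of order $p^{\text{d}_{p}(N)}$, and therefore
\[ [G:K] \;=\; [G:N]\cdot[N:K] \;=\; p^{k + \text{d}_{p}(N)} \;<\; \infty, \]
so $G/K$ is a finite $p$-group.

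Now apply monotonicity of $\text{d}_{p}$ under quotients of $G$: since $G/K$ is a quotient of $G$, we have $\text{d}_{p}(G/K) \leq \text{d}_{p}(G) \leq 1$. For a finite $p$-group $P$, the Burnside basis theorem identifies $d(P) = \text{d}_{p}(P)$, so $G/K$ is generated by a single element, i.e.\ cyclic. But then its subgroup $N/K$ is cyclic too, contradicting the fact that $N/K$ is elementary abelian of rank $\text{d}_{p}(N) \geq 2$.

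I do not anticipate a serious obstacle. The one subtlety worth flagging is the verification that $K = N^{p}N'$, although defined intrinsically from $N$, is normal in the larger group $G$; this follows immediately from its characteristicness in $N$ together with $N \unlhd G$, and it is precisely this observation that lets the hypothesis $\text{d}_{p}(G) \leq 1$ bound $\text{d}_{p}(N)$ from above rather than only $\text{d}_{p}$ of its images in $G$-invariant quotients.
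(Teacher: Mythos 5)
Your proof is correct and follows essentially the same route as the paper: both pass to the characteristic subgroup $N^pN'$ of $N$ (normal in $G$ since $N\unlhd G$), observe that $G/N^pN'$ is a finite $p$-group, and use the Frattini/Burnside basis theorem to derive a contradiction between $\text{d}_p(G)\le 1$ (forcing this quotient to be cyclic) and the non-cyclic subgroup $N/N^pN'$ coming from the surjection $N\twoheadrightarrow F_2$.
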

\begin{proof}
Suppose $P$ is a finite $p$-quotient of $G$. Then the $p$-rank $d_p(P)$
is also at most 1 so $P/P'P^p$ is cyclic. But by the Lemma in Section 1
on the Frattini subgroup of a finite $p$-group,
this means $P$ is cyclic. However if $N\unlhd_{p^k} G$ with $N$
surjecting onto the free group $F_2$ of rank 2 then $N'N^p$ is
characteristic in $N$, hence normal in $G$, and $N/N'N^p$ surjects onto
$C_p\times C_p$. Thus $G/N'N^p$ is a non cyclic finite $p$-group.
\end{proof}
Also we cannot remove normality from the definition of $p$-large.
\begin{ex}\label{examp}
\end{ex}
If $G$ has a non (sub)normal subgroup of index a power of $p$ that 
surjects onto
$F_2$ then $G$ need not be $p$-large. For instance let $G=\Z*C_2$. 
As this has $p$-rank 1 unless $p=2$, $G$ cannot be
$p$-large for primes $p\geq 3$ by Lemma \ref{frat}, although it is 
$2$-large and
hence large. However $G$ has a non normal subgroup of index 3 which is
$F_2*C_2$.
\begin{lem} If $G$ is $p$-large and $H\leq_f G$ then $H$ is $p$-large.
\end{lem}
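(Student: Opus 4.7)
The plan is to take the witness of $p$-largeness for $G$ and intersect it with $H$. Suppose $G$ is $p$-large, so there exists $N \unlhd_{p^k} G$ together with a surjection $N \twoheadrightarrow F_r$ for some $r \geq 2$. I will show that $M := N \cap H$ serves as the required subgroup of $H$.

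First I would verify normality and the index condition. Since $N \unlhd G$ and $H \leq G$, we immediately get $M = N \cap H \unlhd H$. For the index, the standard isomorphism $H/(N \cap H) \cong HN/N$ shows that $[H:M]$ divides $[G:N] = p^k$, hence is itself a power of $p$. So $M \unlhd_{p^j} H$ for some $j \leq k$.

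Next I would check that $M$ still admits a non-abelian free quotient. Because $H \leq_f G$, the index $[N : M] = [N : N \cap H] = [NH : H]$ divides $[G:H]$, which is finite. Thus $M$ has finite index in $N$. Composing the inclusion $M \hookrightarrow N$ with the given surjection $N \twoheadrightarrow F_r$, the image of $M$ is a finite index subgroup of $F_r$. By the Nielsen--Schreier theorem, any finite index subgroup of a non-abelian free group is itself non-abelian free, so $M$ surjects onto a non-abelian free group. Combined with the previous paragraph, this exhibits $H$ as $p$-large.

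There is no real obstacle here: the argument is a direct application of the intersection trick, with the only subtlety being the need to observe that finite index subgroups of $F_r$ remain non-abelian free (so we preserve the rank $\geq 2$ condition on the free quotient). Normality with $p$-power index is automatic from the corresponding property of $N$ in $G$.
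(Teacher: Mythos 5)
Your proof is correct and follows essentially the same route as the paper: intersect $H$ with the witnessing normal subgroup $N$, use $[H:H\cap N]=[HN:N]$ dividing $p^k$, and restrict the surjection onto the free group, noting that the image has finite index in $F_r$ and hence is non-abelian free. The only difference is that you spell out the Nielsen--Schreier step explicitly, which the paper leaves implicit.
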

\begin{proof} Suppose $N\unlhd_{p^k} G$ with $N$ surjecting onto $F_2$.
Then $H\cap N$ is normal and of finite index in both $N$ and $H$, 
so $H\cap N$ 
surjects onto a non abelian free group by restricting the homomorphism
from $N$. Moreover the index $[H:H\cap N]=[HN:N]$ and as $N$ normal implies
that $HN$ is a subgroup of $G$, we have $[HN:N]$ divides $p^k$.
\end{proof}

Now let us consider supergroups of finite index, so suppose that
$H$ is $p$-large and $H\leq_f G$. Then $G$ need not be $p$-large,
even if the index is $p$, as in Example \ref{examp}. Also it is not true
even if $H$ is normal in $G$ and the index is coprime to $p$, or divides
$p$, for instance $C_2*C_2*C_2$ is not $p$-large for $p>2$ but $G$ has
free non abelian
normal subgroups of index $2n$ for all $n\in\N$. But the one case where
we can transfer $p$-largeness to a finite index supergroup is when $H$ is
normal in $G$ and has index a power of $p$. Indeed subnormality works here:
\begin{lem} \label{splg}
If $N$ is $p$-large and is a subnormal subgroup of $G$ with
index $p^k$ then $G$ is $p$-large.
\end{lem}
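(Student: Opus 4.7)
The plan is to produce a subgroup $M^{*}$ normal in $G$ with $[G:M^{*}]$ a power of $p$ such that $M^{*}$ surjects onto a non-abelian free group. First I would reduce to the case where $N$ itself is normal in $G$ with $p$-power index. As recorded in Section 1, a subnormal subgroup of $p$-power index in $G$ contains a subgroup $N'$ which is normal in $G$ and still has $p$-power index in $G$. By the previous lemma, $p$-largeness passes to finite index subgroups, so $N'$ inherits $p$-largeness from $N$. Replacing $N$ by $N'$, I may assume $N\unlhd G$ with $[G:N]=p^{k}$.

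With this reduction in hand, fix $M\unlhd N$ of index $p^{j}$ together with a surjection $\phi\colon M\twoheadrightarrow F_{2}$ witnessing the $p$-largeness of $N$. My candidate is the normal core $M^{*}=\bigcap_{g\in G}gMg^{-1}$, which is normal in $G$ by construction. To see that $[G:M^{*}]$ is a power of $p$, note that each conjugate $gMg^{-1}$ lies inside $gNg^{-1}=N$ and has the same index in $G$ as $M$, namely $[G:N]\cdot[N:M]=p^{k+j}$. The number of distinct conjugates equals $[G:N_{G}(M)]$, which divides $[G:M]=p^{k+j}$ and is therefore itself a power of $p$, in particular finite. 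Hence $M^{*}$ is a finite intersection of subgroups of $G$ of $p$-power index, so via the diagonal embedding $G/M^{*}\hookrightarrow\prod_{g}G/(gMg^{-1})$ the quotient $G/M^{*}$ embeds into a finite $p$-group and is therefore itself a finite $p$-group.

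Finally, $M^{*}\leq M$ has finite ($p$-power) index in $M$, so restricting $\phi$ to $M^{*}$ yields a surjection of $M^{*}$ onto $\phi(M^{*})$, a finite index subgroup of $F_{2}$. By Nielsen--Schreier, $\phi(M^{*})$ is free of rank at least two, so $M^{*}$ surjects onto a non-abelian free group. This exhibits $G$ as $p$-large.

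The main bookkeeping hurdle is the first step: exploiting subnormality together with $p$-power index to pass to a normal-in-$G$ subgroup of $p$-power index. Without this reduction it is unclear that the normal core of an arbitrary conjugate of $M$ would sit inside $N$ in a controlled way, and one could lose the $p$-power index property when intersecting over all of $G$. Once this reduction is in place the argument is a clean application of the normal core construction together with Nielsen--Schreier, and avoids any appeal to Theorem \ref{1.15}, in line with the remark that a direct proof is wanted.
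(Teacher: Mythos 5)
Your overall strategy is sound and does give a correct proof once one step is repaired, but that step as written is faulty. You claim that $G/M^{*}\hookrightarrow\prod_{g}G/(gMg^{-1})$ exhibits $G/M^{*}$ inside a finite $p$-group. The conjugates $gMg^{-1}$ are not normal in $G$, so the factors $G/(gMg^{-1})$ are merely coset spaces, not groups, and the assertion does not make sense as stated; moreover the general principle it seems to invoke is false: an intersection of subgroups of $p$-power index need not have $p$-power index (two distinct subgroups of order $2$ in $S_{3}$ have index $3$ but intersect in the trivial subgroup of index $6$). The fix is immediate from facts you have already set up: after your reduction, $M\unlhd N\unlhd G$, so every conjugate $gMg^{-1}$ is a \emph{normal} subgroup of $N$ of index $p^{j}$. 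Hence $N/M^{*}$ embeds in $\prod_{g}N/(gMg^{-1})$, a finite $p$-group, so $[N:M^{*}]$ is a power of $p$ and therefore $[G:M^{*}]=[G:N]\,[N:M^{*}]$ is as well. With that correction, the Nielsen--Schreier step at the end is fine ($\phi(M^{*})$ has finite index in $F_{2}$, hence is free of rank at least $2$), and $M^{*}\unlhd_{p^{m}}G$ witnesses $p$-largeness.

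For comparison, the paper's proof is shorter and needs neither your initial reduction nor the normal core: it takes $M\unlhd_{p^{l}}N$ surjecting onto $F_{2}$, observes that $M$ is subnormal in $G$ of index $p^{k+l}$, and applies the Section 1 fact (a subnormal subgroup of $p$-power index contains a normal subgroup of $G$ of $p$-power index) directly to $M$, obtaining $L\unlhd_{p^{m}}G$ with $L\leq M$; then $L$ surjects onto a non-abelian free group by the same restriction-plus-Nielsen--Schreier observation you use. So where you apply that fact once (to $N$) and then do core bookkeeping, the paper applies it once (to $M$) and is done. Your route does have the mild virtue of making the normal subgroup completely explicit as a core rather than appealing to the derived $p$-series argument at the level of $M$, but it is not otherwise more elementary.
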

\begin{proof}
There exists $M\unlhd_{p^l} N$ with $M$ surjecting onto $F_2$. Although
$M$ need not be normal in $G$, it is subnormal with index $p^{k+l}$.
Therefore we have a subgroup $L\unlhd_{p^m}G$ with $L\leq M$, as
mentioned in Section 1, and $L$ surjects onto $F_2$ as well.
\end{proof}

An important consequence of a finitely generated group being large is that
it has many finite quotients. We have a variant on this which is that if
$G$ is $p$-large then it has many $p$-groups as quotients, where we include
both finite and infinite $p$-groups. Theorem 4 in \cite{miols} states that
if $G$ is large with $N\unlhd_f G$ mapping onto $F_2$ and $p$ is any prime
then $G$ possesses uncountably many residually finite torsion quotients
(here we always count up to isomorphism) such that in each quotient
the image of $N$ is a $p$-group. Moreover Corollary 1 of this paper shows that
if $G$ is $p$-large then $G$ possesses uncountably many residually finite 
quotients that are $p$-groups.

We can give here a shorter proof of these facts provided we assume a priori
that for each prime $p$ there exist uncountably many finitely generated 
residually finite $p$-groups which are just infinite
(for instance, see \cite{grdg}). To do this we will adapt
Proposition 4.5 in \cite{ers1} which shows that if $H\leq_f G$ where $H$ 
maps onto an infinite group with property (T) then so does $G$. In fact
property (T) is not used directly
in the proof, thus it can be replaced by other suitable properties. 
Moreover a very similar approach was earlier used by $\Pi$.\,N.\,Neumann
in \cite{neu} Section 2 
and was also known to J.\,S.\,Wilson in \cite{wld1}.
\begin{prop}\label{pmn}
Suppose that P is a property of finitely generated groups which is preserved
by quotients, finite index subgroups and supergroups, and finite direct
products. If $G$ is finitely generated and has $H\leq_f G$ such that $H$ maps 
onto an infinite group with P then so does $G$.
\end{prop}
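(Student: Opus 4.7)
The plan is to use the classical induced-representation (wreath-product) construction. First I would reduce to the case $H \trianglelefteq G$ by replacing $H$ with its normal core $H_{0}=\bigcap_{g\in G}gHg^{-1}$, a finite-index normal subgroup of $G$ contained in $H$. Its image $\phi(H_{0})$ has finite index in $Q$, and so is still infinite with P by the finite-index subgroup closure of P. After renaming we may assume $H \trianglelefteq G$ with $n=[G:H]$, and we retain a surjection $\phi\colon H\twoheadrightarrow Q$.

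Let $N=\ker\phi$ and set $\tilde N=\bigcap_{g\in G}gNg^{-1}$, a normal subgroup of $G$ contained in $N$. Choose coset representatives $g_{1}=e,g_{2},\ldots,g_{n}$ of $H$ in $G$ and define
\[
\Phi\colon H\to Q^{n},\qquad \Phi(h)=\bigl(\phi(g_{1}^{-1}hg_{1}),\ldots,\phi(g_{n}^{-1}hg_{n})\bigr),
\]
which is a homomorphism with kernel $\tilde N$. So $\bar H:=H/\tilde N$ embeds in $Q^{n}$ as a subdirect product: each coordinate projection is $\phi$ precomposed with an inner automorphism of $H$, hence surjective. This extends naturally to $\tilde\Phi\colon G\to Q\wr(G/H)=Q^{n}\rtimes(G/H)$, where $G/H$ permutes coordinates via its left-multiplication action on itself. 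The image $\bar G:=G/\tilde N$ is a quotient of $G$, contains $\bar H$ as a normal subgroup of index $n$, and is infinite since $\bar H$ surjects onto $Q$ via the first coordinate.

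By the closure properties of P, $Q^{n}$ has P (finite direct products), and then $Q\wr(G/H)$ has P as a finite-index supergroup of $Q^{n}$ (of index $n$). Showing $\bar G$ has P reduces, via the finite-index supergroup closure, to showing that $\bar H$ has P. Once this is done, $G\twoheadrightarrow\bar G$ gives the required infinite quotient of $G$ with P.

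The main obstacle is exactly this last step: although $\bar H$ sits inside the P-group $Q^{n}$, it is typically of infinite index there, so one cannot pass P down from $Q^{n}$ via the finite-index subgroup closure. My plan is to handle this by induction on $n$ together with a Goursat-type decomposition: the kernel of the first-coordinate surjection $\bar H\twoheadrightarrow Q$ is a subgroup of $Q^{n-1}$ built from the remaining conjugate-$\phi$ maps, and iterating this decomposition should express $\bar H$ out of quotients of $Q$ via the given closure operations. If this recursion proves delicate with only the four listed closure properties, my backup plan is to enrich the construction of $\Phi$ (for example by using a larger $G$-invariant tuple of conjugates, or by augmenting $\phi$ with additional $G$-equivariant data) so that the image in the wreath product achieves finite index, whereupon $\bar G$ inherits P directly as a finite-index subgroup of $Q\wr(G/H)$.
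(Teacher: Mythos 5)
Your frame -- replacing $H$ by a finite-index normal subgroup $K$ of $G$, intersecting the $G$-conjugates of the kernel, and viewing $K/\tilde N$ as a subdirect product of finitely many copies of $Q$ inside a quotient of $G$ -- is exactly the paper's, but the step you flag yourself is a genuine gap, and it is the whole difficulty. With only the four stated closure properties, a subdirect product of copies of a P-group need not have P: already for $n=2$, Goursat's lemma says such a subgroup is a fiber product $\{(q_1,q_2)\in Q\times Q : q_1M=q_2M\}$ over some quotient $Q/M$, which (when $Q/M$ is infinite) has infinite index in $Q^2$ and is an extension with kernel $M$ (an arbitrary normal subgroup of $Q$, possibly of infinite index) and quotient $Q$; neither infinite-index subgroups nor extensions are among the permitted operations, so the proposed Goursat/induction scheme cannot express $\bar H$ through them -- at each stage you are left with exactly the same kind of object one level down. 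The backup plan also fails in general: whether the image has finite index in $Q^n$ (or in $Q\wr(G/H)$) is dictated by $Q$ and the conjugation action, not by how the map $\Phi$ is packaged, so no $G$-equivariant enrichment of $\phi$ can force finite index.

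The missing idea, which is how the paper closes this gap, is to arrange \emph{before} intersecting conjugates that the target is just infinite: the image of $K$ in $Q$ is finitely generated and infinite, every finitely generated infinite group has a just infinite quotient, and P passes to quotients, so one may replace $Q$ by a just infinite P-group $K/L$. Taking $N=L_1\cap\cdots\cap L_n$ over the (finitely many, since $N_G(L)\supseteq K$) conjugates of $L$, each $\Pi_i=\Pi\cap(K/L_i)$ is normal in the just infinite group $K/L_i$, hence is either trivial -- in which case that coordinate can be deleted without losing injectivity -- or of finite index; after deleting trivial coordinates, $\Pi\supseteq\Pi_1\times\cdots\times\Pi_n$ has finite index in the finite direct product, so P descends to $\Pi\cong K/N$ via finite products and finite-index subgroups, and then ascends to the infinite quotient $G/N$ by finite-index supergroup closure. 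Without this just infinite reduction, your argument cannot be completed from the stated hypotheses.
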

\begin{proof} Take $K\unlhd_fG$ with $K\leq H$. Then $K$ also has an infinite
quotient with P by restriction. As P is preserved by quotients, we can assume
that there is $L\unlhd K$ where $K/L$ is just infinite and has P.
Although we need not have $L\unlhd G$,
the normaliser of $L$ in $G$ contains $K$ and so it has finite index in $G$.
Consequently we can assume that $L_1,\ldots ,L_n$ are all of the
conjugates of $L$ in $G$. On putting $N=L_1\cap\ldots \cap L_n$ we have
$N\unlhd G$ and a natural injective homomorphism
\[\pi:K/N\rightarrow \Gamma=K/L_1\times\ldots\times K/L_n\]
which is surjective under each projection onto a factor, so that $K/N$ is a 
subdirect product
of $K/L\cong K/L_i$. Moreover if we let $\Pi=\pi(K/N)\cong K/N$ and
$\Pi_i=\Pi\cap(K/L_i)$, we have that $\Pi_i$ is normal in $K/L_i$.

We can assume that $\Pi_i$ has finite index in the just infinite group
$K/L_i$, because otherwise
$\Pi_i$ is trivial in which case the map from $\Gamma$ which forgets the
$i$th component is injective when restricted to $\Pi$. Now $\Pi$ contains
$\Pi_1\times\ldots\times\Pi_n$ which has finite index in $\Gamma$, so
$\Gamma$ having P and $\Pi\leq_f\Gamma$ means that $\Pi$ does too.
Thus $G/N$ has the finite index subgroup $K/N$ with P, so $G/N$ has P and
is infinite.
\end{proof}

Thus if a finitely generated group $G$ has $H\le_f G$ where $H$ has an
infinite quotient with (T), or ($\tau$), or which is amenable then so does
$G$. In fact the argument can be simplified in the amenable case: if the
property P in Proposition \ref{pmn} is preserved by all subgroups, finite
index supergroups and finite direct products, but not necessarily by
quotients, then we do not need $K/L$ to be just infinite in the proof
because if $K/L$ has P then so will $K/N$, as it is a subgroup of $\Gamma$.

In the original argument by $\Pi$.\,M.\,Neumann, the specific property
considered is that of containing a given infinite simple group $S$.
However the features required of this property for the proof to work
are that it is preserved by finite index subgroups, all supergroups, and
(in place of the just infinite quotient) a maximal condition which says
if $K$ has a quotient $K/L_0$ with this property then there exists
$L_0\le L\unlhd K$ such that $K/L$ and any non trivial normal subgroup
of $K/L$ also have this property. We then proceed exactly as in the
first paragraph of Proposition \ref{pmn} and conclude that either
$\Pi_i$ has our property, so $\Pi$ and hence $G/N$ does, or $\Pi_i$ is
trivial but this cannot happen for all $i$.

\begin{cor}\label{impq}
If $G$ is large with $N\unlhd_f G$ mapping onto $F_2$ and $p$ is any prime
then $G$ possesses uncountably many residually finite torsion quotients
such that in each quotient
the image of $N$ is a $p$-group. Moreover if $G$ is $p$-large then $G$ 
possesses uncountably many residually finite quotients that are $p$-groups.  
\end{cor}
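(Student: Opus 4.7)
The plan is to apply the construction in the proof of Proposition \ref{pmn} simultaneously to a large family of candidate just infinite quotients. By the assumed existence result (see \cite{grdg}), for each prime $p$ there is an uncountable family $\mathcal{Q}_p$ of pairwise non-isomorphic 2-generator residually finite just infinite $p$-groups. Each $Q\in\mathcal{Q}_p$ is a quotient of $F_2$, so composing with the given surjection $N\twoheadrightarrow F_2$ yields $\varphi_Q:N\twoheadrightarrow Q$; write $L_Q:=\ker\varphi_Q$.

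Now I would run the construction from Proposition \ref{pmn} with $H=K=N$ and $L=L_Q$. Since $N$ normalises $L_Q$, the normaliser $N_G(L_Q)$ has finite index in $G$, so $L_Q$ has only finitely many $G$-conjugates $L_1,\dots,L_n$, and $M_Q:=\bigcap_{i=1}^n L_i$ is normal in $G$. Exactly as in that proof, $N/M_Q$ embeds as a subdirect product into $\prod_i N/L_i\cong Q^{n}$; after discarding any coordinate on which the projection is trivial, each surviving image is a non-trivial normal subgroup of the just infinite group $Q$, hence of finite index, so $N/M_Q$ sits with finite index in $Q^n$. Consequently $N/M_Q$ is residually finite and a $p$-group. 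Because $M_Q\le L_Q\le N$, the image of $N$ in $G/M_Q$ is exactly $N/M_Q$; hence $G/M_Q$ is a finite extension of the residually finite $p$-group $N/M_Q$ by $G/N$. This yields the first claim (a residually finite torsion quotient in which the image of $N$ is a $p$-group) immediately, and for the second ($p$-large) claim the additional hypothesis that $[G:N]$ is a power of $p$ promotes $G/M_Q$ itself to a residually finite $p$-group.

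The remaining point is to show that $\{G/M_Q:Q\in\mathcal{Q}_p\}$ contains uncountably many isomorphism classes of groups. Each $G/M_Q$ is finitely generated, hence countable, and it contains $Q$ as a subquotient via $N/M_Q\twoheadrightarrow N/L_1\cong Q$. Following the cardinality reasoning in the proof of Theorem 4 of \cite{miols}, I would argue that any fixed isomorphism class $[R]$ can arise as $[G/M_Q]$ for at most countably many $Q\in\mathcal{Q}_p$, so the uncountability of $\mathcal{Q}_p$ forces uncountably many non-isomorphic $G/M_Q$. This is the step I expect to be the main obstacle: one has to recover $Q$ up to countable ambiguity from the abstract isomorphism type of $G/M_Q$, exploiting that the distinguished finite index subgroup $N/M_Q$ is commensurable with $Q^n$ and that $Q$ itself is just infinite.
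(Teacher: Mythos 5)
Your construction phase matches the paper's: both run the subdirect-product argument of Proposition \ref{pmn} with $K\le N$ and $L$ the kernel of a map onto a just infinite residually finite $p$-group, take the intersection $M$ of the finitely many $G$-conjugates, and observe that $N/M$ is a residually finite $p$-group sitting with finite index in a power of the just infinite group, while $G/M$ is torsion (and a $p$-group in the $p$-large case). But there are two genuine gaps. First, and most seriously, the uncountability count -- which is the actual content of the corollary -- is not carried out; you explicitly flag it as "the main obstacle" and defer to \cite{miols}, whereas the whole point of this corollary in the paper is to give a short self-contained proof rather than quote that theorem. The paper's argument is concrete: if only countably many isomorphism classes of quotients $G/N_i$ arose, then some fixed finitely generated group $K/N_{i_0}$ would embed with finite index into $(K/L_i)^{n_i}$ for uncountably many $i$; the groups $(K/L_i)^{n_i}$ fall into uncountably many isomorphism classes, because each contains the finitely generated subgroup $K/L_i$, the $K/L_i$ are pairwise non-isomorphic, and a finitely generated group has only countably many finitely generated subgroups up to isomorphism; this contradicts the fact that a finitely generated group can sit with finite index in only countably many supergroups up to isomorphism. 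Note also that the exponent $n$ in your "commensurable with $Q^n$" varies with $Q$, so one cannot simply "recover $Q$" from $G/M_Q$; the counting really does have to pass through a countability statement of the above kind.

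Second, your starting point assumes an uncountable family of \emph{2-generator} residually finite just infinite $p$-groups, so that each is a quotient of $F_2$ and you can compose with $N\twoheadrightarrow F_2$. The a priori existence result assumed in the paper (from \cite{grdg}) only gives finitely generated such groups, and indeed the standard constructions need not be 2-generated. The paper repairs this: fix one $n$ for which uncountably many members of the family are $n$-generated, then replace $N$ by a characteristic finite-index subgroup $C=N'N^{q}$ ($q$ a power of $p$), which surjects onto $F'F^{q}$ and hence onto a free group of rank at least $n$; this same choice of $C$ keeps $G/C$ a finite $p$-group in the $p$-large case, so that $G/N_i$ is a $p$-group there. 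Without either this device or a justification that the family can be taken 2-generated, the maps $\varphi_Q:N\twoheadrightarrow Q$ you use are not available. (Your remaining steps -- torsion, residual finiteness of $G/M_Q$ as a finite extension of a residually finite group, and the $p$-group conclusion when $[G:N]$ is a $p$-power -- are fine.)
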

\begin{proof}
As there exist uncountably many finitely generated residually finite
$p$-groups which are just infinite,
there must exist an integer $n$ such that uncountably many
of these are generated by $n$ elements and so are quotients of $F_n$.
If $K\unlhd_f G$ and $K$ maps
onto any non-abelian free group $F$ then we can assume $K$ maps onto $F_n$ by
taking $C$ characteristic and of finite index in $K$ such that $C$
surjects onto a free group of rank at least $n$. 
For instance we can take $C=K'K^q$ where $q$ is a power
of $p$, which is characteristic in $K$ and will map onto $F'F^q$. By taking
a suitably big power, $F'F^q$ will have arbitrarily high index in $F$ and
hence arbitrarily high rank.
We can then replace $C$ by $K$, ensuring that $K$ has uncountably many
residually finite just infinite $p$-groups.

Now if $P$ is the property of being a torsion group then the conditions for
Proposition \ref{pmn} are satisfied. On following the proof, we see 
that if $K$ surjects onto the just infinite 
$p$-group $K/L$ then $G$ surjects onto the
infinite torsion group $G/N$, with $K/N$ a subdirect product of $K/L$, thus
$K/N$ is an infinite $p$-group.

Being residually finite does not satisfy the appropriate properties but,
as noted in \cite{wld1}, if $K/L$ is residually finite then so is $G/N$:
On taking $x\notin N$ (but which we can assume is in $K$)
then, as $N=\cap gLg^{-1}$ over all $g\in G$, there
exists $g\in G$ with $g^{-1}xg\notin L$. So we can take a finite index
normal subgroup $M/L$ of $K/L$ that misses $g^{-1}xgL$. Thus $x$ is not
contained in the finite index subgroup $gMg^{-1}$ of $G$ but $N$ is.

So we have an uncountable set $\{L_i:i\in I\}$ with $L_i\unlhd K$ where, by
using the above construction, each $L_i$ gives rise
to $N_i\unlhd G$ such that $G/N_i$ is an
infinite residually finite torsion group. Now if
only countably many $N_i$ (up to isomorphism of $G/N_i$) occur
then we must have a particular $N_{i_0}$ which is obtained from
uncountably many $L_i$. But then $K/N_{i_0}$ is a finite index subgroup
of $(K/L_i)^{n_i}$ for each of these $L_i$ and some $n_i\in\N$.
Now there are uncountably many
isomorphism classes when we vary over all 
$(K/L_i)^{n_i}$ because each one contains the
finitely generated subgroup $K/L_i$ and a finitely generated group has
only countably many finitely generated subgroups. But the finitely
generated group $K/N_{i_0}$ can only sit with finite index
in countably many supergroups (up to isomorphism). 

Finally if $K\unlhd_{p^k}G$ then we can take $C$ as before, so that 
on replacing $C$ with $K$ and using the above, we now have $K\unlhd_{p^l}G$
with $K/N$ and $G/K$ both $p$-groups, so $G/N$ is too.
\end{proof}
  
\section{Application to Coxeter groups}

In this section, we extend a result of Grigorchuk \cite{key-Gri}
to odd primes.
\begin{defn*}
\cite{key-Gri} A \emph{Coxeter group} is a group with presentation,\[
C=\langle x_{1},\ldots,x_{n}|x_{1}^{2},\ldots,x_{n}^{2},
(x_{i}x_{j})^{m_{ij}},1\le i<j\le n\rangle,\]
where $m_{ij}$ is an integer at least 2, or $\infty$ (the case $m_{ij}=\infty$
means that the relation $(x_{i}x_{j})^{m_{ij}}$ is not present).
\end{defn*}
$\ $ 

We also note a dichotomy of Coxeter groups, which is listed under
Theorem 14.1.2 of \cite{key-Cox}.
\begin{thm}
\cite{key-Cox} A Coxeter group is either virtually abelian or large.
\end{thm}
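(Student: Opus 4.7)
The plan is to prove the dichotomy using the classical classification of irreducible Coxeter groups via the signature of the Tits form, and then handle the non-virtually-abelian case through a hyperbolic triangle subgroup.

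First I would reduce to the irreducible case by a direct product decomposition: a disconnected Coxeter diagram splits $C$ as $C_1 \times C_2$, and both virtual abelianness and largeness are stable under finite direct products. Virtual abelianness is immediate, while if $H \le_f C_1$ surjects onto $F_2$ then $H \times C_2$ has finite index in $C$ and surjects onto $F_2$ via the projection to the first factor. For a connected Coxeter diagram, the standard trichotomy for the Tits form says that $C$ is either finite (form positive definite), of affine type and hence virtually $\Z^{n-1}$ (form positive semidefinite of nullity one), or has indefinite Tits form. The first two cases give virtually abelian groups immediately.

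Second, in the remaining indefinite irreducible case, I would locate a rank three standard parabolic subgroup $C_T$ whose labels $(p,q,r)$ satisfy $1/p + 1/q + 1/r < 1$, that is, a hyperbolic triangle group. Such a $C_T$ acts properly cocompactly by isometries on the hyperbolic plane, and by Selberg's lemma it has a torsion-free subgroup of finite index, which is the fundamental group of a closed orientable hyperbolic surface of genus $g \ge 2$. This surface group has deficiency $2g - 1 \ge 3 > 1$, so by the Baumslag--Pride theorem it is large, and therefore $C_T$ is itself large.

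The main obstacle is transferring largeness from $C_T$ to the whole group $C$: standard parabolic subgroups are not of finite index in $C$, and largeness in general does not propagate from arbitrary subgroups to supergroups. Here I would use the theorem of Haglund and Wise that every finitely generated Coxeter group is virtually compact special, so that quasiconvex subgroups, and in particular standard parabolic subgroups, are virtual retracts of $C$. The virtual retraction allows any free quotient of a finite index subgroup of $C_T$ to be pulled back to a free quotient of some finite index subgroup of $C$, giving largeness of $C$. The remaining corner case, where the indefinite irreducible $C$ contains no hyperbolic rank three standard parabolic, reduces to a finite check on the compact hyperbolic (Lann\'er) diagrams of higher rank; each of those produces a closed hyperbolic orbifold subgroup of large genus by a direct geometric argument in the Davis complex, and the same virtual retract mechanism completes the proof.
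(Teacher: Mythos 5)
The paper offers no proof of this statement: it is quoted directly from \cite{key-Cox} (Theorem 14.1.2 there, resting on Margulis--Vinberg), so your argument has to be judged on its own. The reduction to the irreducible case and the Tits-form trichotomy are fine, and so is the case where some rank-three standard parabolic has labels with $1/p+1/q+1/r<1$ (with the small caveat that if a label is $\infty$ the torsion-free finite index subgroup is a non-elementary free group rather than a closed surface group, which is still large). The genuine gap is your final case analysis. It is simply not true that an irreducible indefinite Coxeter group with no hyperbolic rank-three standard parabolic must be one of the finitely many Lann\'er diagrams: any simply-laced diagram (all labels $2$ or $3$) has every rank-three parabolic spherical or affine, since the only possibilities are $(2,2,2)$, $(2,2,3)$, $(2,3,3)$ (spherical) and $(3,3,3)$ (affine), yet there are irreducible indefinite simply-laced groups of arbitrarily large rank --- for instance the complete graph on $n\ge 5$ vertices with all labels $3$, or $E_n$ for $n\ge 10$. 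These lie in your ``corner case'' but are not Lann\'er (those exist only in ranks $3$--$5$), nor even quasi-Lann\'er once the rank exceeds $10$, so the promised ``finite check'' never happens and the indefinite case is left unproved for a huge class of groups. The correct reduction is to pass to a standard parabolic that is minimal among the indefinite ones, so that all of \emph{its} proper parabolics are spherical or affine; such systems are classified (Lann\'er and quasi-Lann\'er, ranks $3$ to $10$), but one must then establish largeness for every member of rank at least $4$ on that list, and that is precisely the nontrivial content of the Margulis--Vinberg theorem which your sketch does not supply.

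A secondary weak point is the transfer mechanism. Standard parabolics of a Coxeter group are not retracts in general (killing the generators outside $T$ is not a homomorphism when an odd label joins $T$ to its complement), and if you invoke Haglund--Wise virtual specialness you need more than quasiconvexity when $C$ is not word-hyperbolic: you need the parabolic to be convex-cocompact with respect to the Niblo--Reeves cubulation, which is true but must be cited or proved, and is far heavier machinery than the statement requires. This part is repairable; the case analysis above is the step that actually fails.
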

$\ $

Grigorchuk's result is the following.
\begin{thm}
\label{thm:Gri}\cite{key-Gri} Every Coxeter group, that is not virtually
abelian and for which all labels $m_{ij}$ (referring to the presentation
above) are powers of 2 or infinity, surjects onto uncountably many
infinite residually finite 2-groups.
\end{thm}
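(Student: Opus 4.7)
The plan is to reduce the theorem to showing that $C$ is $2$-large, and then invoke Corollary \ref{impq}, which for any $2$-large group immediately produces uncountably many residually finite $2$-group quotients. So the whole task becomes establishing $2$-largeness for $C$ under the stated hypotheses.

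To prove $C$ is $2$-large, I would use Theorem \ref{thm:Main} together with Lemma \ref{splg}: it suffices to find a subnormal $N \leq C$ of $2$-power index admitting a presentation with $\mathrm{def}_{2}(N)>1$. For the natural Coxeter presentation one computes directly
\[
\mathrm{def}_{2}(C) \;=\; \frac{n}{2} \;-\; \sum_{i<j,\; m_{ij}<\infty} 2^{-k_{ij}},
\]
writing $m_{ij}=2^{k_{ij}}$, since $\nu_{2}(x_{i}^{2})=1$ and $\nu_{2}((x_{i}x_{j})^{2^{k_{ij}}})=k_{ij}$. When this quantity exceeds $1$ (for example, when enough of the $m_{ij}$ equal $\infty$), Theorem \ref{thm:Main} applies directly to $C$ and we are done. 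Otherwise, we exploit two structural inputs: the Coxeter dichotomy (Theorem 14.1.2 of \cite{key-Cox}) tells us $C$ is large, so some finite-index normal subgroup of $C$ surjects onto $F_{2}$; and since every $m_{ij}$ is even or $\infty$, the relation $(x_{i}x_{j})^{m_{ij}}$ becomes trivial in the abelianization, so $C^{\mathrm{ab}} \cong (\mathbb{Z}/2)^{n}$, providing a plentiful supply of normal subgroups of $2$-power index. The idea is to pass to such a subgroup $N$, run Reidemeister--Schreier, and then apply Puchta rewriting to consolidate the conjugate families of $2$nd-power relators coming from the $(x_{i}x_{j})^{2^{k_{ij}}}$ into fewer relators of larger $p$-valuation, thereby boosting $\mathrm{def}_{2}(N)$ above $1$.

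The main obstacle is precisely this borderline regime in which $\mathrm{def}_{2}(C) \leq 1$ for the natural presentation yet $C$ is not virtually abelian: here the naive multiplicative identity $\mathrm{def}_{2}(N)-1 = 2^{k}(\mathrm{def}_{2}(C)-1)$ applied to the lifted presentation provides no gain, so one must genuinely benefit from Puchta rewriting and the detailed combinatorics of parabolic and reflection subgroups to turn many conjugate relators into single ones inside $N$. A cleaner alternative, avoiding the need to track $\mathrm{def}_{2}$ explicitly, is to bypass Theorem \ref{thm:Main} and instead appeal to Theorem \ref{1.15} directly, constructing an abelian $2$-series of rapid descent for $C$ by hand: seeded by the $(\mathbb{Z}/2)^{n}$ quotient and fed thereafter by the $F_{2}$-quotient of a finite-index subgroup guaranteed by largeness, in the same spirit as the construction of $\{H_{i}\}$ in the proof of Theorem \ref{thm:Main}. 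Once $2$-largeness of $C$ is obtained by either route, Corollary \ref{impq} supplies the uncountable family of infinite residually finite $2$-group quotients of $C$, completing the proof.
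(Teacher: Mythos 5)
You should note at the outset that the paper does not prove Theorem \ref{thm:Gri} at all: it is quoted from Grigorchuk \cite{key-Gri} as motivation, and the paper's own contribution is the odd-prime analogue, Theorem \ref{thm:p-Cox}, proved by passing to the index-two $p$-Coxeter subgroup, reducing along the surjection families \eqref{eq:1} and \eqref{eq:2} to the base cases $S_3(p)$ (where $\mbox{def}_p>1$) and $S_4(3)$ (MAGMA). So your argument has to stand on its own, and as written it has a genuine gap. The reduction ``show $C$ is $2$-large, then apply Corollary \ref{impq}'' is fine, and your computation $\mbox{def}_2(C)=\frac{n}{2}-\sum_{m_{ij}<\infty}2^{-k_{ij}}$ is correct; but this exceeds $1$ only when there are very few finite labels, and the regime $\mbox{def}_2(C)\le 1$ --- which you yourself flag as the ``main obstacle'' and then leave to two unexecuted strategies --- is the typical case, not a corner case. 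For instance the right-angled group on a pentagon has $\mbox{def}_2=0$, the $(4,4,4)$ triangle Coxeter group has $\mbox{def}_2=3/4$, and the right-angled group on the join of two pentagons (a non-virtually-abelian direct product, all labels $2$) has $\mbox{def}_2$ far below $1$; passing to the index-two orientation subgroup rescues the first two examples ($\mbox{def}_2=3/2$ and $5/4$ respectively) but not the third, where even that subgroup's presentation has $\mbox{def}_2<1$. So the heart of the theorem is exactly the part your sketch does not deliver.

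Your fallback route is moreover essentially circular. The Davis dichotomy gives largeness, i.e.\ some finite-index normal subgroup surjecting onto $F_2$, but with no control that its index is a power of $2$; producing a \emph{normal subgroup of $2$-power index} onto $F_2$ is precisely the $2$-largeness you are trying to prove, and the paper is explicit (Lemma \ref{frat}, Example \ref{examp}, and the remark before Theorem \ref{thm:p-Cox}) that largeness alone does not suffice for conclusions of this type. Similarly, $C^{\mathrm{ab}}\cong (C_2)^n$ only seeds the first term of an abelian $2$-series: rapid descent in the sense of Theorem \ref{1.15} requires $\text{d}_2(G_i/G_{i+1})$ to grow proportionally to the $2$-power index $[G:G_i]$ all the way down, and neither largeness nor a large first homology supplies that without the very $\mbox{def}_2>1$ estimate you lack. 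Finally, the reduction that saves the odd-prime case in the paper --- collapsing all labels to $p$ and deleting generators, as in \eqref{eq:1} and \eqref{eq:2} --- is unavailable at $p=2$, since collapsing all labels to $2$ yields the abelian group $(C_2)^n$; so closing the gap requires a genuine structural argument over the Coxeter diagram (handling joins and products, passing to suitable parabolic or finite-index subgroups), i.e.\ something like Grigorchuk's original proof, and not merely Puchta rewriting of the standard presentation.
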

In order to extend Grigorchuk's result to odd primes, it is necessary
to alter our focus to subgroups of Coxeter groups. This is because,
for $n\in\mathbb{N}$ and $p$ an odd prime, Coxeter groups of the
form\[
G=\langle x_{1},\ldots,x_{n}|x_{1}^{2},\ldots,x_{n}^{2},
(x_{i}x_{j})^{m_{ij}},1\le i<j\le n,\ \&\ m_{ij}\ \text{is a }p\text{th power 
(including }\infty)\rangle,\]
do not admit surjections onto $p$-groups. To see why this is the
case, we note that all generators of $G$ are of order two, and
so they must be mapped to the identity in a $p$-group (since $p$
is an \emph{odd} prime, there are no elements of order two in a $p$-group).

Therefore we consider instead the index two normal subgroup $P$ of
$G$, which has presentation,\[
P=\langle a_{1},\ldots,a_{n-1}|a_{1}^{m_{12}},\ldots,a_{n-1}^{m_{1n}},
(a_{i-1}a_{j-1}^{-1})^{m_{ij}}\rangle\]
where $2\le i<j\le n$ and $m_{rs}$, $1\le r<s\le n$, is as given
in $G$. This normal subgroup $P$ has the geometric interpretation
as the {}``orientation-preserving'' subgroup of $G$. The above
presentation for $P$ is obtained via a straightforward application
of the Reidemeister-Schreier rewriting process (details in Subsection
4.1).

Before we state our result, we give a name to these 
{}``orientation-preserving'' subgroups of Coxeter groups where the labels
are all powers of $p$.
\begin{defn*}
For $p$ an odd prime and $n\ge2$, suppose that $C$ is a Coxeter group with
all labels $m_{ij}$ a power of $p$ (or $\infty$).
A \emph{$p$-Coxeter subgroup} is the index 2 subgroup of $C$ given by the 
kernel of $\theta:C\rightarrow C_2$ where $C(x_i)=1$ for all $i$.
It has the form\[
\langle a_{1},\ldots,a_{n-1}|a_{1}^{m_{12}},\ldots,
a_{n-1}^{m_{1n}},(a_{i-1}a_{j-1}^{-1})^{m_{ij}}\rangle\]
where $2\le i<j\le n$ and $m_{rs}$, $1\le r<s\le n$ are $p$th
powers (or infinity).
\end{defn*}
$\ $

Here is our result of this section, and we embark on its proof in
what follows.
\begin{thm}
\label{thm:p-Cox}Every $p$-Coxeter subgroup that is not virtually
abelian surjects onto uncountably many infinite residually finite
$p$-groups.
\end{thm}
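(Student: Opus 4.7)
The plan is to show the $p$-Coxeter subgroup $P$ is $p$-large; then Corollary~\ref{impq} delivers uncountably many infinite residually finite $p$-group quotients of $P$. To establish $p$-largeness I will try to apply Theorem~\ref{thm:Main}, either to $P$ itself or to a subnormal subgroup $P^{*}$ of $p$-power index in $P$ (in which case Lemma~\ref{splg} transfers $p$-largeness from $P^{*}$ back up to $P$).

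The starting computation is the $p$-deficiency of the given presentation. Every relator is a $p$-th power because each $m_{ij}$ is a power of $p$, so
\[
\text{def}_p(P)=(n-1)-\sum_{1\le i<j\le n,\ m_{ij}<\infty}\frac{1}{m_{ij}}.
\]
If this strictly exceeds $1$, Theorem~\ref{thm:Main} concludes $P$ is $p$-large at once. Rephrased, the required condition is $\sum 1/m_{ij}<n-2$ on the Coxeter matrix, which is typically implied by the assumption that $G$ is of indefinite type (i.e.\ $P$ is not virtually abelian); this handles the generic case, in particular whenever enough $m_{ij}$ are infinite or are sufficiently high $p$-powers.

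The main obstacle is the boundary (and sub-boundary) regime where $P$ is not virtually abelian yet $\text{def}_p(P)\le 1$; this really happens, for example when $n=4$ and all $m_{ij}=p=3$, in which case the Gram matrix of $G$ has signature $(3,1)$ (so $G$ is large) while $\text{def}_3(P)=1$ exactly, and likewise for $n=5$ with all $m_{ij}=3$ one obtains $\text{def}_3(P)=2/3$. In this regime the Puchta multiplicativity formula $\text{def}_p(H)-1=p^{k}(\text{def}_p(P)-1)$ cannot promote $\text{def}_p$ past $1$ via a subnormal $p$-power-index subgroup. My strategy here is to search for a subnormal $P^{*}\trianglelefteq_{p^{k}}P$ admitting a presentation other than the Puchta-rewritten one with $\text{def}_p(P^{*})>1$, obtained from a more refined Reidemeister--Schreier analysis that exploits the rotation-subgroup structure of $P$ and a well-chosen surjection $P\to C_p$; or, failing that, to bypass Theorem~\ref{thm:Main} in this case by constructing an abelian $p$-series of rapid descent for $P$ directly and invoking Theorem~\ref{1.15}. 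Once $p$-largeness of $P$ is established by either route, Corollary~\ref{impq} finishes the proof.
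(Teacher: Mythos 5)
Your first step is sound: every relator of the given presentation is a $p$-th power, so $\text{def}_p(P)\ge (n-1)-\sum 1/m_{ij}$, and whenever this exceeds $1$ Theorem \ref{thm:Main} plus Corollary \ref{impq} finish the argument; this is exactly how the paper handles $S_3(p)$ for $p>3$. The genuine gap is everything else, and it is larger than you suggest. The set of non-virtually-abelian $p$-Coxeter subgroups with $\text{def}_p\le 1$ is not a small boundary regime confined to $p=3$: for \emph{every} prime $p$, once $n$ is large enough that $n(n-1)/2\ge p(n-2)$ (e.g.\ all $n\ge 4$ for $p=3$, all $n\ge 9$ for $p=5$) the group $S_n(p)$ with all labels equal to $p$ has $p$-deficiency at most $1$, so ``not virtually abelian'' in no way typically gives $\text{def}_p>1$. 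For all these cases your proposal offers intentions rather than arguments: the ``more refined Reidemeister--Schreier analysis'' is not carried out, and your fallback of constructing an abelian $p$-series with rapid descent directly is, by Theorem \ref{1.15}, literally equivalent to proving $p$-largeness of the finitely presented group $P$, so it is not a bypass but a restatement of the difficulty.

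The idea you are missing is the paper's reduction, which avoids computing $\text{def}_p$ for the general presentation altogether: the property of surjecting onto uncountably many infinite residually finite $p$-groups pulls back along surjections, and every $p$-Coxeter subgroup surjects onto $S_n(p)$ (replace each relator $w^{p^{k}}$ by $w^{p}$, i.e.\ impose the stronger relations), while $S_n(p)$ surjects onto $S_{n-1}(p)$ by killing a generator. Hence everything reduces to $S_3(p)$ when $p>3$, where the single computation $\text{def}_p(S_3(p))=2-3/p>1$ applies, exactly as in your easy case. For $p=3$ the chain must stop one step earlier because $S_3(3)$ is virtually $\Z\times\Z$ (it is excluded as virtually abelian), and the paper instead verifies that $S_4(3)$ is $3$-large by an explicit computation: a subnormal subgroup of index $9$, found with MAGMA, maps onto $F_2$, and Lemma \ref{splg} transfers $3$-largeness up to $S_4(3)$; the remaining $n=3$, $p=3$ cases with some label at least $9$ or $\infty$ have $3$-deficiency greater than $1$ and fall to the direct argument. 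Without the surjection reduction, and without a concrete treatment of the base case $S_4(3)$ (note that even the authors resorted to computer calculation there, precisely because Puchta rewriting cannot raise a $p$-deficiency that is at most $1$), your proof does not go through.
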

$\ $

It is insufficient to establish that the
relevant $p$-Coxeter subgroups are \emph{large} for the result to
hold, but \emph{$p$-largeness} is enough by Corollary \ref{impq}.
It so happens that all $p$-Coxeter
subgroups that are not virtually abelian, and hence large, do satisfy
the theorem. The virtually abelian $p$-Coxeter subgroups are all
the cyclic ones and $\langle a_{1},a_{2}|a_{1}^{3},a_{2}^{3},
(a_{1}a_{2}^{-1})^{3}\rangle$,
as we shall see from the proof of Theorem \ref{thm:p-Cox}. For this
last group, $\langle a_{1},a_{2}|a_{1}^{3},a_{2}^{3},
(a_{1}a_{2}^{-1})^{3}\rangle$,
which is isomorphic to 
$T=\langle a_{1},a_{2}|a_{1}^{3},a_{2}^{3},(a_{1}a_{2})^{3}\rangle$,
there is a useful geometric interpretation. Its supergroup \[
G=\langle x_{1},x_{2},x_{3}|x_{1}^{2},x_{2}^{2},x_{3}^{2},
(x_{1}x_{2})^{3},(x_{1}x_{3})^{3},(x_{2}x_{3})^{3}\rangle,\]
can be viewed as the group generated by reflections in the sides of
an equilateral triangle. The group $G$ has $\mathbb{Z}\times\mathbb{Z}$
as an index six subgroup (the subgroup of translations), and likewise
$T$ has $\mathbb{Z}\times\mathbb{Z}$ as an index three subgroup.

\begin{proof}
For convenience, we will refer to the statement {}``there exists
uncountably many surjections to infinite residually finite $p$-groups''
as $(\dagger)$, and we denote \[
S_{n}(p):=\langle a_{1},\ldots,a_{n-1}|a_{1}^{p},\ldots,a_{n-1}^{p},
\left(a_{i-1}a_{j-1}^{-1}\right)^{p},\ 2\le i<j\le n\rangle.\]
We have $n\geq 3$ or else $S_n(p)$ is cyclic.
For $p>3$, the $p$-deficiency of the following $p$-Coxeter
subgroup\[
S_{3}(p)=\langle a_{1},a_{2}|a_{1}^{p},a_{2}^{p},(a_{1}a_{2}^{-1})^{p}\rangle\]
is $2-\frac{3}{p}>1$. So $S_{3}(p)$ is $p$-large by Theorem \ref{thm:Main} 
and so it satisfies $(\dagger)$.

$\ $

Next, we note the following family of surjections, as was done similarly
in Grigorchuk's paper \cite{key-Gri}. Here the indices are understood
to range as in the definition of $p$-Coxeter subgroups.

\begin{equation}
\langle a_{1},\ldots,a_{n-1}|a_{1}^{p^{k_{12}}},
\ldots,a_{n-1}^{p^{k_{1n}}},\left(a_{i-1}a_{j-1}^{-1}\right)^{p^{k_{ij}}}
\rangle\ \twoheadrightarrow\ \langle a_{1},\ldots,
a_{n-1}|a_{1}^{p},\ldots,a_{n-1}^{p},\left(a_{i-1}a_{j-1}^{-1}\right)^{p}
\rangle\label{eq:1}\end{equation}
where $k_{rs}\in\mathbb{N}$ for $1\le r<s\le n$; and

\begin{equation}
\langle a_{1},\ldots,a_{n}|a_{1}^{p},\ldots,a_{n}^{p},
\left(a_{i-1}a_{j-1}^{-1}\right)^{p}\rangle\ \twoheadrightarrow\ 
\langle a_{1},\ldots,a_{n-1}|a_{1}^{p},\ldots,a_{n-1}^{p},
\left(a_{i-1}a_{j-1}^{-1}\right)^{p}\rangle.\label{eq:2}\end{equation}

$\ $

Let $p>3$. Since $S_{n+1}(p)\twoheadrightarrow S_{n}(p)$ by the
surjection (\ref{eq:2}) and $S_{3}(p)$ satisfies $(\dagger)$, by
composition of surjections of type (\ref{eq:2}), we deduce that $S_{n}(p)$
satisfies $(\dagger)$ for all $n\ge3$. From the family of surjections
(\ref{eq:1}), we conclude that all $p$-Coxeter subgroups with $p>3$
satisfy $(\dagger)$. 

For $n=3$, all such 3-Coxeter subgroups, apart
from $\langle a_{1},a_{2}|a_{1}^{3},a_{2}^{3},(a_{1}a_{2}^{-1})^{3}\rangle$,
have 3-deficiency bigger than 1.
For the last remaining collection of 3-Coxeter subgroups with $n>3$,
we have proven that $S_{4}(3)$ is 3-large with the aid of MAGMA (details
in Subsection 4.2). So as before, $(\dagger)$ holds for all 3-Coxeter
subgroups with $n>3$.
\end{proof}

$\ $

\subsection{Reidemeister-Schreier rewriting for $p$-Coxeter subgroups}

$\ $

For every Coxeter group \[
C=\langle x_{1},\ldots,x_{n}|x_{1}^{2},\ldots,x_{n}^{2},
(x_{i}x_{j})^{m_{ij}}\rangle\]
 where $1\le i<j\le n$, $m_{ij}\in\{2,3,\ldots\}\cup\{\infty\}$, there
exists an index two normal subgroup $P$ which has presentation

\[
P=\langle a_{1},\ldots,a_{n-1}|a_{1}^{m_{12}},\ldots,a_{n-1}^{m_{1n}},
\left(a_{i-1}a_{j-1}^{-1}\right)^{m_{ij}}\rangle\]
where $2\le i<j\le n$ and $m_{rs},\ 1\le r<s\le n$, is as given
in $C$. 

$\ $

The above presentation for $P$ is obtained via the Reidemeister-Schreier
rewriting method: we define $P$ as the kernel of the homomorphism
$G\twoheadrightarrow\{0,1\}$ given by considering the parity of word
lengths. Our Schreier transversal is taken to be the set $T=\{e,x_{1}\}$.

With this transversal, we obtain generators for $P$ of the form 
$tx(\overline{tx})^{-1}$,
where $t\in T$ and $x\in\left\{ x_{1},\ldots,x_{n}\right\} $ and
$\overline{tx}$ is the coset representative of the element $tx$.

For $t=e$: 

$\qquad x=x_{1}:\quad x_{1}x_{1}^{-1}=e$

$\qquad x=x_{2}:\quad x_{2}x_{1}^{-1}=x_{2}x_{1}=:a_{1}$ (note: $x_{1}$
is of order 2)

$\qquad\qquad\qquad\qquad\vdots$

$\qquad x=x_{n}:\quad x_{n}x_{1}^{-1}=x_{n}x_{1}=:a_{n-1}$

$\ $

For $t=x_{1}:$

$\qquad x=x_{1}:\quad x_{1}^{2}=e$

$\qquad x=x_{2}:\quad x_{1}x_{2}=a_{1}^{-1}$ (this equality is obtained
using the relators $x_{1}^{2},\ldots,x_{n}^{2}$)

$\qquad\qquad\qquad\qquad\vdots$

$\qquad x=x_{n}:\quad x_{1}x_{n}=a_{n-1}^{-1}$

$\ $

Now the relators of $P$ are \[
\left(x_{1}x_{2}\right)^{m_{12}}=a_{1}^{m_{12}},\ldots,
\left(x_{1}x_{n}\right)^{m_{1n}}=a_{n-1}^{m_{1n}}\]
and

\[
\left(x_{i}x_{j}\right)^{m_{ij}}=\left(x_{i}x_{1}x_{1}x_{j}
\right)^{m_{ij}}=\left(a_{i-1}a_{j-1}^{-1}\right)^{m_{ij}}\ \text{for }
2\le i<j\le n.\]

$\ $

The other relators of $G$, that is $x_{1}^{2},\ldots,x_{n}^{2}$,
were absorbed above during our construction of the generating set.

$\ $

\subsection{The use of MAGMA}
\begin{claim*}
$P_{0}:=S_{4}(3)=\langle a_{1},a_{2},a_{3}\ |\ a_{1}^{3},a_{2}^{3},
a_{3}^{3},(a_{1}a_{2})^{3},(a_{1}a_{3})^{3},(a_{2}^{-1}a_{3})^{3}\rangle$
is 3-large. (Note: we have replaced generators $a_{2},a_{3}$ as in
the definition of a $p$-Coxeter subgroup, by $a_{2}^{-1},a_{3}^{-1}$
for convenience).\end{claim*}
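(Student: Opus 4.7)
The plan is to combine Theorem~\ref{thm:Main} with Lemma~\ref{splg}: I will locate a subnormal subgroup $H$ of $P_0$ whose index is a power of $3$ and which admits a finite presentation with $3$-deficiency strictly greater than $1$, so that Theorem~\ref{thm:Main} makes $H$ itself $3$-large and Lemma~\ref{splg} then lifts $3$-largeness back to $P_0$.

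Observe first that $P_0$ itself has $\text{def}_{3}(P_0) = 3 - 6\cdot\tfrac{1}{3} = 1$, so Theorem~\ref{thm:Main} does not apply on the nose, and the Schlage-Puchta multiplicity identity gives only $\text{def}_{3}(H) - 1 = 3^{k}(\text{def}_{3}(P_0)-1) = 0$ for the presentation of $H$ produced by raw Puchta rewriting. The decisive role of MAGMA is that the full Reidemeister-Schreier presentation typically admits substantial Tietze simplification beyond what the rewriting bookkeeping records: some relators turn out to be consequences of others, and sometimes a product of relators can be re-expressed as a $3$rd power whose $3$-adic depth exceeds what raw rewriting detects. Either mechanism strictly raises $\text{def}_{3}$, and I only need it to cross the threshold $1$.

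Concretely, I would first read off $\text{d}_{3}(P_0)$ from the abelianisation modulo $3$: every defining relator is a cube, so modulo $3$ they impose no constraint on $\{a_1,a_2,a_3\}$, giving $\text{d}_{3}(P_0) = 3$. Let $H$ be the kernel of the natural surjection $P_0 \twoheadrightarrow (C_3)^3$, so that $H \unlhd_{27} P_0$. Using MAGMA's Reidemeister-Schreier routine, obtain a presentation of $H$ on $(3-1)\cdot 27 + 1 = 55$ generators and at most $6\cdot 27 = 162$ relators, then run iterated Tietze simplification. For each surviving relator $r$ compute the largest $k$ with $r = w^{3^{k}}$ in the current free group and evaluate $\text{def}_{3}(H)$; once it exceeds $1$, Theorem~\ref{thm:Main} and Lemma~\ref{splg} together finish the proof.

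The principal obstacle is computational and heuristic rather than conceptual. MAGMA's Tietze routines are not canonical, so a successful simplification may require experimenting with different Schreier transversals, with the ordering of moves, or with an intermediate subnormal chain $P_0 \unrhd H' \unrhd H$ rather than a single passage to the index-$27$ kernel. Moreover MAGMA does not natively track $\nu_{3}$, so a short auxiliary routine scanning each relator for cube factorisations must be supplied. Should the index-$27$ kernel fail to yield $\text{def}_{3} > 1$ after aggressive simplification, the procedure iterates by computing the $3$-rank of the simplified $H$ and descending further along a subnormal chain of $3$-power index; this is legitimate since Lemma~\ref{splg} requires only \emph{some} subnormal $3$-power-index subgroup with $\text{def}_{3} > 1$.
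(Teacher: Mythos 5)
Your reduction is logically sound: if you could exhibit a subnormal subgroup $H\leq P_0$ of $3$-power index together with a finite presentation having $\text{def}_3(H)>1$, then Theorem~\ref{thm:Main} would make $H$ $3$-large and Lemma~\ref{splg} would transfer this to $P_0$, exactly as the paper also uses Lemma~\ref{splg} at the final step. But the decisive step of your argument --- that some such $H$ with a visible presentation of $3$-deficiency greater than $1$ actually exists and will be found by Reidemeister--Schreier plus Tietze simplification --- is never established; it is stated as a hope, and you even concede it may fail at index $27$ and propose to iterate indefinitely, with no guarantee of success. Since $\text{def}_3(P_0)=1$ and Puchta rewriting preserves $\text{def}_3-1=0$ down any subnormal $3$-chain, any gain must come from accidental redundancies or hidden cube structure uncovered by simplification, and there is no a priori reason (largeness of a subgroup does not imply it admits a presentation of $p$-deficiency exceeding $1$) nor any reported computation showing this happens here. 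So what you have written is a search strategy, not a proof: for a statement whose content is precisely a computational certificate, the certificate is missing.

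The paper's own data also casts doubt on the specific mechanism you rely on. Its MAGMA computation descends to a subnormal subgroup $P_2$ of index $9$ whose simplified presentation has $6$ generators and $7$ relators, of which only three are cubes, giving $\text{def}_3=6-4-1=1$ --- the hoped-for excess over $1$ does not appear even after simplification. The paper therefore takes a different and more direct route at that point: it kills the four generators $a,b,c,e$ of $P_2$ and checks that the quotient $\langle d,f\rangle$ is free of rank $2$, so that $P_2$ surjects onto $F_2$ and is $3$-large by definition (no appeal to Theorem~\ref{thm:Main} is needed), after which Lemma~\ref{splg} finishes. To repair your argument you would either need to report an actual successful computation producing a subnormal $3$-power-index subgroup with $\text{def}_3>1$, or replace the deficiency criterion by an explicit free quotient of some subnormal subgroup, as the paper does.
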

\begin{proof}
Using MAGMA's LowIndexNormalSubgroups function, we considered the
following index three normal subgroup of $P_{0}$:\[
P_{1}=\langle x,y,z,w\ |\ y^{3},w^{3},[x,z],(w^{-1}y^{-1})^{3},
w^{-1}y^{-1}z^{-1}wx^{-1}yzx\rangle,\]
which was thirteenth on the list of fourteen normal subgroups with
index at most three in $P_{0}$. The above presentation for $P_{1}$
was obtained using MAGMA's Simplify function.

Using MAGMA's LowIndexNormalSubgroups function a further time, we
applied MAGMA's Simplify command to the first index three normal subgroup
of $P_{1}$ on MAGMA's list:\[
P_{2}=\langle a,b,c,d,e,f\ |\ b^{3},e^{3},[c,a^{-1}],(b^{-1}e^{-1})^{3},\]
\[
fe^{-1}a^{-1}bcaf^{-1}a^{-1}c^{-1}b^{-1}ae,f^{-1}a^{-1}bcafc^{-1}
d^{-1}b^{-1}e^{-1}de,afbdcbd^{-1}b^{-1}f^{-1}a^{-1}c^{-1}b^{-1}\rangle.\]
Then we formed the quotient \[
\nicefrac{P_{2}}{\langle\langle a,b,c,e\rangle\rangle}\]
which is isomorphic to $\langle d,f\rangle\cong F_{2}$. Hence $P_{2}$
is 3-large by definition, and since $P_{2}$ is subnormal in $P_{0}$
of index $3^{2}$, we have proved that $P_{0}=S_{4}(3)$ is 3-large,
as required.
\end{proof}
$\ $

\subsection{More $p$-large groups}

$\ $

At the end of \cite{key-Gri}, Grigorchuk mentions the $p$-groups
($p\ge3$ is a prime) of Gupta-Sidki \cite{key-GS} that are 2-generated,
residually finite, branch, and just-infinite. Their generators $x,y$
satisfy the relators $x^{p}$,$y^{p}$, $(x^{i}y^{j})^{p^2}$, $1\le i,j\le p-1$,
as well as infinitely others. We look at the group\[
G=\langle x,y|x^{p},y^{p},(x^{i}y^{j})^{p^{2}}\rangle\]
which surject onto these Gupta-Sidki $p$-groups.

Note that, as\[
(x^{i}y^{j})^{-1}=y^{-j}x^{-i}=y^{p-j}x^{p-i}=
y^{p-j}(x^{p-i}y^{p-j})y^{-(p-j)},\]
we only need the relators $x^{p}$,$y^{p}$, and $(x^{i}y^{j})^{p^{2}}$
with $1\le i\le (p-1)/2$, $1\le j\le p-1$,
so $\text{def}_{p}(G)=2-\frac{2}{p}-\frac{(p-1)^{2}}{2p^{2}}$ which
is greater than 1 for $p\ge3$. Thus, our groups are $p$-large.

The paper then puts forward the following question:\\
\hfill\\
{\bf Problem} Let $p\ge 5$ be prime. Does there exist an infinite,
residually finite $p$-group generated by two elements $x,y$ subject to
the relations $x^p=y^p=1$ and $(x^iy^j)^p=1$ for $1\le i,j\le p-1$?\\ 
\hfill\\

By Corollary \ref{impq} there are uncountably many such examples if the
group $G(p)$ given by this finite presentation is $p$-large. Unfortunately
the $p$-deficiency of all these presentations
is now well below 1. However we can confirm that we do have $p$-largeness
for $G(5)$ and $G(7)$. This was done by computation and
we now give brief details: the paper \cite{mecmp} describes an
algorithm in MAGMA which aims to show that, on input
of a finite presentation, the group $G$ so defined is large. The main
theoretical tool that is used comes from \cite{how}. This states that
if $G$ has a homomorphism $\chi$ onto $\Z$ with kernel $K$
and there exists a prime $p$
such that the mod $p$ Alexander polynomial $\Delta(t)$ (with respect to
$\chi$) is zero then the finite index normal
subgroup $KG^n$ of $G$ maps onto
the free product $C_p*C_p*C_p$ for sufficiently large $n$. Now $KG^n$ is
certainly $p$-large because the free product is too so, on taking $n$ to be 
a power of $p$, we have that $G$ is $p$-large by Lemma \ref{splg}.

However $G(p)$ has no homomorphisms onto $\Z$, but we are done if we can
find (sub)normal subgroups of $p$ power index that do and which satisfy
the above condition on some mod $p$ Alexander polynomial.

For $G(5)$ we found that a subnormal subgroup of index 25
with abelianisation $(C_5)^2\times\Z^4$ satisfies this condition. The
computation was instant. For $G(7)$ the same was true for index 49 and
the abelianisation was $(C_7)^{10}\times\Z^6$. Here the computation
took a few hours to find subgroups of this index, but the Alexander
polynomial computation was instant.

$\ $

\section{Puchta groups versus Golod-Shafarevich groups}

For convenience, we define the following.
\begin{defn*}
We call a finitely generated group $G$ a \emph{Puchta group} if $G$
has a presentation $\langle X|R\rangle$, with $|X|<\infty$, such
that there exists a prime $p$ with $\text{def}_{p}(G;X,R)>1$, and we call
$\langle X|R\rangle$ a \emph{Puchta presentation}. 
\end{defn*}
$\ $

Note that $G$ will always have other presentations which are not Puchta.

In this section, we relate Puchta groups to Golod-Shafarevich groups.
Before we define a Golod-Shafarevich
group, we first note that the Zassenhaus $p$-filtration 
$\left\{ G_{n}\right\} $ is given by $G_{1}=G$, $G_{2}=[G,G]G^{p}$, 
$G_{3}=[G_{2},G]G_{\ulcorner\nicefrac{3}{p}\urcorner}^{p}$,
$\ldots$ , $G_{i}=[G_{i-1},G]G_{\ulcorner\nicefrac{i}{p}\urcorner}^{p}$.

\begin{rem*}
The Zassenhaus $p$-filtration (and hence the lower central $p$-series)
intersects in the identity for a free group $F(X)$. 
\end{rem*}
$\ $

\begin{defn*}

(a) Consider a group presentation $\langle X|R\rangle$, where $X$
is finite but $R=\{r_1, r_2,\ldots \}$ could be finite or infinite.
For the free group $F(X)$ and any non identity $w\in F(X)$, the \emph{degree} 
$\mbox{deg}(w)$ is the maximum $i$ such that $w\in F(X)_i$.
The presentation $\langle X|R\rangle$ is said to
satisfy the \emph{Golod-Shafarevich} \emph{condition} with respect
to $p$ if there exists a real number $t\in(0,1)$ such that \[
1-H_{X}(t)+H_{R}(t)<0\text{ where }H_{X}(t)=|X|t\text{ and }
H_{R}(t)=\sum_{i=1}^{\infty}t^{\mbox{deg}(r_i)}.\]
We call $F(t)=1-H_X(t)+H_R(t)$ the 
\emph{Golod-Shafarevich function} (or \emph{polynomial}
if $R$ is finite) for this presentation.

(b) A group $\Gamma$ is called \emph{Golod-Shafarevich}, or \emph{GS} for
short, if it has a 
presentation satisfying the Golod-Shafarevich condition. It is shown in
\cite{gol} that $\Gamma$ is infinite, and a major strengthening of this
due to Zelmanov in \cite{zel} is that the pro-$p$ completion
$\hat{\Gamma}_p$ contains a non abelian free pro-$p$ group.

We also have a stronger condition: for $G$ finitely presented with
$\text{d}_{p}(G)\ge2$, $G$ is a \emph{strongly Golod-Shafarevich
group} if there exists a finite presentation $\langle X|R\rangle$
for $G$ such that\begin{equation}
\text{def}(G;X,R)+\frac{(\text{d}_{p}(G))^{2}}{4}-\text{d}_{p}(G)>0.
\label{eq:GS}\end{equation}
\end{defn*}
That this implies the GS condition can be seen by assuming that all relators
of degree at least two are equal to two, then examining the associated
quadratic function.
\begin{rem*} An important property of degree with respect to the prime
$p$ that will be used is $\mbox{deg}(w^p)=p$ $\mbox{deg}(w)$ for
any non identity $w\in F(X)$.
\end{rem*}
$\ $
\begin{question*}
Are Puchta groups Golod-Shafarevich groups?
\end{question*}
Answer: not always. Consider, for example, $p=2$ and the presentation
$P=\langle x,y|x^{2}\rangle$. Then $\text{def}_2(P)=2-1/2>1$. 
But $F(t)=1-H_{X}(t)+H_{R}(t)$ is $(1-t)^2$. Another example for $p=2$
which does not even touch the $t$-axis is
$P=\langle x,y,z|x^2,y^2,z^2\rangle$ with $\text{def}_2(P)=3/2$ again and
$F(t)=1-3t+3t^2=(1-3t/2)^2+3t^2/4$. Moreover we can show that $P$ cannot
be GS with respect to any presentation $\langle X|R\rangle$. Suppose so
then we must have $|R|\geq |X|$ because $P$ has finite abelianisation.
But if $r$ is the number of relators of degree 1 in $R$
then $d_2(P)=3$ implies
that $r\geq |X|-3$. In particular we can assume that 
$F(t)=1-3t+t^{i_1}+\ldots+t^{i_n}$ where $n=|R|-|X|+3\geq 3$.
We claim that at least three of these $n$ remaining relators have degree
at most 2. This is because the quotient of $F(X)$ by the third subgroup
$F(X)_3$ in the Zassenhaus 2-filtration is a nilpotent group of class 2
with abelianisation $(C_4)^{|X|}$. However $F(X)/F(X)_3$ surjects onto
the equivalent quotient $P/P_3$, and this has abelianisation
$(C_2)^3$. Consequently at least $|X|$
non trivial relations are needed in $F(X)/F(X)_3$ to get $G/G_3$ as a
quotient, but relators of degree 3 or more in $F(X)$ become trivial
relators in $F(X)/F(X)_3$.

We can also turn $P$ into a torsion
group by adding to the set of relators, $R$, the following:

\[
w^{2^{N(w)}}\text{ for }w\in F(x,y,z)\text{ and }N(w)\gg0,\]
maintaining the condition $\text{def}_{2}(P)>1$. Certainly, this
group is still not a Golod-Shafarevich group with respect to any
presentation as the previous argument works here too.

$\ $

We now find all examples of the above form.
\begin{lem}\label{exmp}
Let $p$ be any prime and let $G=\langle x_1,\ldots ,x_k
|w_1^p,\ldots ,w_l^p\rangle$ be a Puchta presentation such that
$w_1,\ldots ,w_l$ all have degree 1 in $F_k$. Then this presentation
is GS unless $p=2$ and $(k,l)=(2,1),(3,3),(4,4),(4,5),(6,6)$; 
$p=3$ and $(k,l)=
(2,2),(3,4),(3,5)$; or $p=5$, $k=2$ and $l$ equal to 3 or 4.
\end{lem}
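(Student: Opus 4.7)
The plan is to compute the Golod--Shafarevich polynomial for this family of presentations, reduce being GS to a one-variable inequality, and then enumerate the finitely many Puchta triples that violate it.

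Since each $w_{i}$ has degree one in the Zassenhaus $p$-filtration of $F_{k}$, the remark preceding the lemma gives $\mbox{deg}(w_{i}^{p})=p$, so $H_{R}(t)=lt^{p}$ and
\[
F(t)=1-kt+lt^{p}.
\]
The Puchta hypothesis is $\text{def}_{p}(G)=k-l/p>1$, equivalently $l<p(k-1)$, and the task is to decide when $F(t)<0$ for some $t\in(0,1)$.

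The derivative $F'(t)=-k+plt^{p-1}$ has a unique positive zero $t^{*}=(k/(pl))^{1/(p-1)}$, and the identity $l(t^{*})^{p}=kt^{*}/p$ yields the clean form
\[
F(t^{*})=1-\frac{(p-1)kt^{*}}{p}.
\]
When $t^{*}\in(0,1)$, equivalently $l>k/p$, this is the minimum of $F$ on $(0,1)$, and a direct rearrangement of $F(t^{*})<0$ produces the key bound
\[
l<\frac{k^{p}(p-1)^{p-1}}{p^{p}}.
\]
In the residual regime $l\le k/p$ the function $F$ is monotonically decreasing on $(0,1)$, so being GS reduces to $F(1)=1-k+l<0$; a brief check shows the only extra failure here is $(p,k,l)=(2,2,1)$, corresponding to equality $l=k^{p}(p-1)^{p-1}/p^{p}=1$ (where $F(t)=(1-t)^{2}$ touches zero but never dips below it), so the clean criterion absorbs it: the presentation is GS if and only if $l<k^{p}(p-1)^{p-1}/p^{p}$.

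The final step is enumeration. The non-GS Puchta triples are the integers $l$ satisfying $k^{p}(p-1)^{p-1}/p^{p}\le l<p(k-1)$. For fixed $p$ the left-hand side grows like $k^{p}$ while the right is linear in $k$, so only small $k$ contribute; for fixed $k\ge 2$ the left-hand side eventually overtakes the right as $p$ grows, so only small $p$ matter. Concretely, one checks that for every prime $p\ge 7$ the interval contains no integer for any $k\ge 2$, while for $p\in\{2,3,5\}$ one runs through the short list of admissible $k$ and records the valid $l$ to obtain the stated exceptions. The main obstacle is purely arithmetic bookkeeping: correctly handling the strict GS inequality at the boundary value $l=k^{p}(p-1)^{p-1}/p^{p}$, and uniformly verifying that the $k$- and $p$-thresholds for an empty exception interval really do trigger where claimed, so that the finite tabulation is genuinely exhaustive.
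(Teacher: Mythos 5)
Your setup is exactly the paper's: the degree remark gives $F(t)=1-kt+lt^{p}$, the critical point $t^{*}$ with $pl(t^{*})^{p-1}=k$ gives $F(t^{*})=1-k t^{*}(p-1)/p$, and the resulting criterion (not GS exactly when $lp^{p}\ge k^{p}(p-1)^{p-1}$, with the boundary regime $pl\le k$ contributing only $(p,k,l)=(2,2,1)$, where $F(t)=(1-t)^{2}$) agrees with the paper's computation. Up to that point the argument is correct.

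The gap is in the final step, which you yourself flag as ``the main obstacle'' and then do not carry out. The lemma's content is precisely the exhaustive list of exceptions, and what remains is not mere bookkeeping. First, the claim that for every prime $p\ge 7$ the interval $\bigl[k^{p}(p-1)^{p-1}/p^{p},\,p(k-1)\bigr)$ contains no integer \emph{for any} $k\ge 2$ is an infinite family of inequalities; your two observations (fixed $p$ forces small $k$, fixed $k$ forces small $p$) do not by themselves give a uniform threshold, since a priori the cutoff in one variable could drift with the other. One needs a genuine two-variable argument, e.g.\ the paper's: non-emptiness of the interval forces $p^{p+1}(k-1)>k^{p}(p-1)^{p-1}$; since $k^{p}/(k-1)\ge 2^{p}$ for $k\ge 2$ this reduces to $p^{p+1}>2^{p}(p-1)^{p-1}$, which fails for all $p\ge 7$ by an induction on $p$ (and in fact fails even at equality, which is what later lets the paper handle $p$-deficiency exactly $1$). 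Second, for $p\in\{2,3,5\}$ the stated pairs $(k,l)$ must actually be tabulated: the paper bounds $k$ by the explicit inequalities $k^{2}-8k+8<0$, $4k^{3}-81k+81<0$, $4^{4}k^{5}-5^{6}k+5^{6}<0$ (using monotonicity to limit the range of $k$) and then lists the integers $l$ satisfying both $l<p(k-1)$ and $lp^{p}\ge k^{p}(p-1)^{p-1}$. Without these two pieces your proof establishes the correct criterion but not the lemma's specific exceptional list.
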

\begin{proof} In order for the presentation to be Puchta, we need
$p(k-1)>l$ and for it to be GS we must find out whether $F(t)=1-kt+lt^p$
travels below the $t$-axis when $t\in (0,1)$. We have that the only
point $u>0$ where $F'(u)=0$ satisfies $plu^{p-1}=k$. We
can assume here that $u<1$: otherwise $k\geq pl$, and if $k>l+1$ then
$F(1)<0$ anyway, so we are left with $k\leq l+1$ giving $l+1\geq pl\geq 2l$,
giving just $l=1$, $k=p=2$ and $F(t)=(1-t)^2$, or $l=0$ and $k=1$ but
this has $p$-deficiency 1 for all $p$.

Otherwise we look at the sign of $F(u)=1-ku+ku/p$. We have $F(u)\geq 0$
exactly when $lp^p\geq k^p(p-1)^{p-1}$. Thus in order for the
presentation to be Puchta but not GS, we require
$p^{p+1}(k-1)>k^p(p-1)^{p-1}$. Now for $n$ and $k$ integers at least 2, we
have by induction on $n$
that $k^n/(k-1)\geq 2^n$. Thus we need primes $p$ such that
$p^{p+1}>2^p(p-1)^{p-1}$. Again we show by induction that
$2^n(n-1)^{n-1}\geq n^{n+1}$ for $n\ge 7$, with the base case certainly true.
The inductive step is left as an exercise for the reader, with the
hint that $(n+1)^3\leq 2n^3$ holds for such $n$. Thus we have no examples 
unless $p=2,3$ or 5. Indeed for $p\geq 7$
we do not even have examples where the presentation has
$p$-deficiency equal to 1 unless the deficiency is 1 because we do
not even have equality in the above.

As for the three exceptional primes, for $p=2$ we need $k^2-8k+8<0$ leaving
only $k=2,3,4,5,6$. Now for each value of $k$ we see which integers $l$
satisfy both $p(k-1)>l$ and $lp^p\geq k^p(p-1)^{p-1}$, giving us the answer
above. For $p=3$ we need $4k^3-81k+81<0$ which is increasing for
$k\geq 3\sqrt{3}/2$ so we need only check $k=2,3$ for the 2 equations
involving $l$. Finally $p=5$ gives rise to $4^4k^5-5^6k+5^6<0$ and the
left hand side is increasing for at least $k\geq 2$, so we only check
$k=2$ and find just $l=3$ or 4.
\end{proof}

Although the above presentations are of a special form, we can use this
to get general results for Puchta groups.
\begin{thm}\label{nallp}
If $G$ is a finitely generated group with a Puchta presentation for any
prime $p>5$ then $G$ is GS with respect to this presentation.
\end{thm}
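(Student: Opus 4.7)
The plan is to bound the Golod-Shafarevich function $F(t) = 1 - |X|t + H_R(t)$ of the given Puchta presentation by the GS function of an auxiliary ``single-level'' presentation, evaluate at a carefully chosen $t^*$, and reduce to the numerical inequality $2^p(p-1)^{p-1} > p^{p+1}$ that already underpins Lemma \ref{exmp}.

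First I would partition $R = R_0 \sqcup R'$ where $R_0 = \{r \in R : \nu_p(r) = 0\}$ and $R' = R \setminus R_0$. For $t \in (0,1)$, the estimates $t^{\deg(r)} \leq t$ for $r \in R_0$ (since $\deg(r) \geq 1$) and $t^{\deg(r)} \leq t^{p^{\nu_p(r)}}$ for $r \in R'$ (since $\deg(v^{p^k}) = p^k \deg(v) \geq p^k$) combine to give
\[F(t) \;\leq\; 1 - mt + \sum_{r \in R'} t^{p^{\nu_p(r)}},\]
where $m := |X| - |R_0|$. The Puchta hypothesis gives $|R_0| < |X| - 1$, and since both sides are integers this forces $m \geq 2$; moreover $\sum_{r \in R'} p^{-\nu_p(r)} < m - 1$. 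Letting $L_k := \#\{r \in R' : \nu_p(r) = k\}$ for $k \geq 1$ and writing $L_k t^{p^k} = (L_k p^{-k})(p^k t^{p^k})$, the uniform estimate $p^k t^{p^k} \leq M(t) := \sup_{j \geq 1} p^j t^{p^j}$ yields
\[F(t) \;<\; 1 - mt + (m-1)\, M(t).\]

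The key choice is $t^* := \bigl(m/((m-1)p^2)\bigr)^{1/(p-1)}$, which is the minimiser of $1 - mt + (m-1) p t^p$. A brief computation using $m \geq 2$ gives $(t^*)^{p-1} = m/((m-1)p^2) \leq 1/p$, from which $p^{k+1}(t^*)^{p^{k+1}} \leq p^k(t^*)^{p^k}$ follows for all $k \geq 1$ by induction; hence $M(t^*) = p(t^*)^p$. Substituting and simplifying using the defining relation for $t^*$ collapses the bound to
\[F(t^*) \;<\; 1 - mt^* \cdot (p-1)/p,\]
so $F(t^*) < 0$ reduces to $m^p(p-1)^{p-1} > (m-1)\, p^{p+1}$. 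The function $m \mapsto m^p/(m-1)$ is increasing on $[p/(p-1), \infty)$, hence on $[2, \infty)$ for $p \geq 2$, so $m^p/(m-1) \geq 2^p$; it therefore suffices to establish $2^p(p-1)^{p-1} > p^{p+1}$ for each prime $p \geq 7$, which is exactly the inequality proved in Lemma \ref{exmp} by direct verification at $p = 7$ together with the induction $2^n(n-1)^{n-1} \geq n^{n+1}$ for $n \geq 7$.

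The main obstacle is confirming that $M(t^*) = p(t^*)^p$, i.e.\ that at our specific $t^*$ the contributions $p^k(t^*)^{p^k}$ from high-valuation relators are dominated by the $k = 1$ term; this is precisely where the combination $p \geq 7$ and the integrality forcing $m \geq 2$ becomes essential, and it is what permits the general Puchta case to be compared against the single-level worst case already disposed of by Lemma \ref{exmp}.
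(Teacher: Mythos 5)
Your argument is correct and follows essentially the same route as the paper: bound the GS function by the worst single-level Puchta polynomial $1-mt+(m-1)pt^p$ after absorbing the $\nu_p=0$ relators, and reduce to the numerical inequality $2^p(p-1)^{p-1}\ge p^{p+1}$ for $p\ge 7$ established in the proof of Lemma \ref{exmp}. The differences are only in bookkeeping: you evaluate at the explicit minimiser $t^*$ and verify the domination $p^k(t^*)^{p^k}\le p(t^*)^p$ directly there (which, note, needs only $m\ge 2$ and works for every prime --- the hypothesis $p\ge 7$ enters only in the final numerical inequality, not in computing $M(t^*)$), whereas the paper converts higher-power terms iteratively on an interval and then checks that the minimum lies in that interval; your termwise comparison of nonnegative series also handles infinite $R$ uniformly, avoiding the paper's truncation step.
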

\begin{proof}
We first assume we have a finite presentation
\[\langle x_1,\ldots ,x_k|w_1^{p^{r_1}},\ldots ,w_l^{p^{r_l}}\rangle\]
and we consider the associated GS function 
\[f(t)=1-kt+\sum_{i=1}^l t^{\mbox{deg}(w_i^{p^{r_i}})}.\]
But we have $\mbox{deg}(w_i^{p^{r_i}})\geq p^{r_i}$ so
$t^{\mbox{deg}(w_i^{p^{r_i}})}\leq t^{p^{r_i}}$ for $t\in (0,1)$.
Consequently let us define the \emph{Puchta polynomial}
(or \emph{Puchta function} in the case of infinite presentations) to be
$g(t)=1-kt+\sum t^{p^{r_i}}$. Then if we can find $t\in (0,1)$
with $g(t)<0$, we have $f(t)<0$ too.
Moreover if any relator is not a proper $p$th power (and there are at
most $k-2$ of these as the $p$-deficiency of the presentation is greater than
1) then we can decrease both $k$ and $l$ by this amount without changing $g$.
Thus we may assume that $k\geq 2$; indeed this also holds if the
presentation has $p$-deficiency exactly 1 unless it has deficiency 1 too. 
Suppose we now express $g(t)$ as
\[1-kt+s_1t^p+s_2t^{p^2}+\ldots +s_nt^{p^n}\mbox{ where }
0\leq s_i\mbox{ and }0<s_n.\]

We define the $p$-deficiency of $g$ to be 
$k-\sum_{i=1}^n s_i/p^i$. Now the idea is to reduce the powers of $p$
occurring in the terms of $g$ in an attempt to increase the function, but
without changing the $p$-deficiency. 
If $n=1$ then we can apply Lemma \ref{exmp}. 
But for $n>1$ we can replace $s_nt^{p^n}$ in $g(t)$ with the 
term $(s_n/p)t^{p^{n-1}}$ to get $h(t)$
without changing the $p$-deficiency of the function. However we have that
$h(t)\geq g(t)$ when $t^{p^{n-1}}\geq pt^{p^n}$. For any $n\geq 2$
we have $p^{n-1}(p-1)\geq p(p-1)$ so this inequality holds whenever
$1\geq pt^{p(p-1)}$.

We now have a polynomial $h$ with degree at most $p^{n-1}$ and we can further
turn terms of the form $t^{p^{n-1}}$ into those of the form
$t^{p^{n-2}}$, again without changing the $p$-deficiency. 
We can continue doing this
until we now have $g_0(t)=1-kt+prt^p$, where the $p$-deficiency of our
presentation is $k-r>1$, with $g_0(t)\geq g(t)$ on $[0,p^{-1/(p^2-p)}]$.
Now we can increase $r$ to $k-1$ to finish with 
$F(t)=1-kt+p(k-1)t^p\ge g_0(t)$ and the $p$-deficiency of 
the function $F$ is exactly 1. Now Lemma \ref{exmp}, in
particular the comment in the proof about when the $p$-deficiency is
equal to 1, tells us that $F(t)$ 
has to go below the $t$-axis for $t\in (0,1)$. Now we only
know that $f(t)\leq F(t)$ for $t\leq p^{-1/(p^2-p)}=y_p$. But evaluating
$F'(t)$ at $y_p$ gives $-k+p^2(k-1)y_p^{p-1}$. Thus $F'(y_p)>0$ if
$p^2p^{-1/p}>k/(k-1)$ which is true for all $p$ because $k/(k-1)\leq 2$
and $p^{1/p}<2$. This means that the minimum $m$ of $F(t)$, where $F'(m)=0$
and $F(m)<0$, is less than $y_p$ as $F'$ is increasing so we have
$f(m)<0$ as well.

Finally if we have an infinite Puchta presentation
$\langle x_1,\ldots ,x_k|w_1^{p^{r_1}},w_2^{p^{r_2}},\ldots\rangle$, we
run through the above proof for each $n\in\N$ using the presentation defined
by taking the first $n$ relators, from which we obtain the corresponding
functions $f_n(t)$. From this we see that
$f_n(t)\leq F(t)$ for all $n$ and for $t\leq p^{-1/(p^2-p)}$, thus
$f_n(m)\leq F(m)<0$ so for our infinite presentation we have that
the power series $f(m)$ converges and is less than 0 too.
\end{proof}

\begin{rem*}
This proof also works for presentations (finite or infinite) with
$p$-deficiency equal to 1, with the exception of those finite presentations
which also have deficiency 1 as here the Puchta polynomial is $1-t$.

It was shown in \cite{wld1} that the soluble
groups of deficiency 1 are precisely the groups $G_k\cong\langle a,t|
tat^{-1}=a^k\rangle$ for $k\in\Z$. In fact these are also the virtually
soluble groups. This gives rise to the question: what, in addition
to these, are the virtually soluble groups with $p$-deficiency equal to
1? By Theorem \ref{nallp} there are no further
examples for primes at least 7, because the pro-$p$ completion of a
virtually soluble group cannot contain a free pro-$p$ group, as opposed
to a GS group. For $p=2$ and 3 examples do exist, such as the infinite
dihedral group and the $(3,3,3)$ triangle group, but we do not know of
an example for $p=5$.
\end{rem*}
   
However we must still deal with $p=2,3$ and 5 where the above theorem
is not true. We find that even here Puchta groups are very close to
being Golod-Shafarevich.

\begin{thm}\label{rmingp}
Every Puchta group $G$ has a Golod-Shafarevich
subgroup of finite index, which we can take to be normal and
of $p$th power index in $G$.
\end{thm}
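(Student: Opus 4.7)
The plan is to use Schlage--Puchta's multiplicativity to find a subnormal $p$-power index subgroup $H \trianglelefteq_{p^k} G$ whose induced Puchta presentation is Golod--Shafarevich for $k$ large enough, and then pass to a normal subgroup contained in $H$.

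Write $\text{def}_p(G) = 1 + \varepsilon$ with $\varepsilon > 0$. As in the proof of Theorem \ref{thm:Main}, build a subnormal chain $G = G_0 \trianglerighteq G_1 \trianglerighteq \cdots \trianglerighteq G_k$ with each $G_{i+1} \trianglelefteq_p G_i$ (possible because $\text{d}_p(G_i) \ge \text{def}_p(G_i) > 1$ at every stage), and set $H := G_k$, so that $[G:H] = p^k$ and $\text{def}_p(H) = 1 + p^k\varepsilon$. The Puchta-rewritten presentation for $H$ has $N_H = p^k(|X|-1)+1$ generators; writing each relator as $v_j^{p^{\rho_j}}$ with $v_j$ not a $p$th power in the new free group, set $n_i = \#\{j : \rho_j = i\}$, $s_H = n_0$, $\tilde N = N_H - s_H$, and $M = \sum_{i \ge 1} n_i / p^i$. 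The identity $\tilde N - M = \text{def}_p(H) = 1+p^k\varepsilon$ gives $\tilde N \ge 1 + p^k\varepsilon$ and $0 \le M \le \tilde N - 1$.

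Associated to this presentation is the Puchta function $P_H(t) = 1 - \tilde N t + \sum_{i \ge 1} n_i t^{p^i}$, which dominates the Golod--Shafarevich function $F_H$ on $(0,1)$ because $\deg(v^{p^\rho}) \ge p^\rho$ in the Zassenhaus $p$-filtration. The key estimate uses two simple facts: the inequality $p^i \ge p i$ for $p \ge 2$ and $i \ge 1$, giving $t^{p^i} \le (t^p)^i$ on $(0,1)$; and the trivial bound $n_i \le p^i M$ (since $n_i/p^i$ is a non-negative term of the sum $M$). For $pt^p < 1$ these combine to yield
\[\sum_{i \ge 1} n_i t^{p^i} \le M \sum_{i \ge 1} (pt^p)^i = \frac{Mpt^p}{1-pt^p}.\]
Evaluating at $t = 2/\tilde N$ and letting $k \to \infty$ (so $\tilde N \to \infty$), we have $pt^p = O(\tilde N^{-p})$, so the right-hand side is $O(\tilde N^{1-p}) \to 0$. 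Hence $P_H(2/\tilde N) = -1 + o(1) < 0$ once $k$ is sufficiently large, giving $F_H(2/\tilde N) < 0$ and showing $H$ is Golod--Shafarevich.

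To upgrade to a normal subgroup, apply the remark in Section 2: $H$ contains some $N \trianglelefteq G$ of $p$-power index, and since being Golod--Shafarevich is inherited by finite-index subgroups, $N$ is Golod--Shafarevich as well. The main obstacle is establishing the above estimate, particularly in the infinite-presentation case: $|R_H|$ may be infinite, but the finiteness of $\text{def}_p(H)$ still forces $M < \infty$, so the bound $n_i \le p^i M$ applies uniformly in $i$ and the geometric-series argument goes through.
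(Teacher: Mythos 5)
Your core estimate, showing that the subnormal subgroup $H=G_k$ of index $p^k$ is Golod--Shafarevich once $k$ is large, is correct: the domination $F_H(t)\le P_H(t)$ on $(0,1)$ (using $\mbox{deg}(v^{p^\rho})=p^\rho\,\mbox{deg}(v)\ge p^\rho$ and absorbing the finitely many non-$p$th-power relators into the linear term), the bounds $n_i\le p^iM$ and $t^{p^i}\le (t^p)^i$, and the evaluation at $t=2/\tilde N$ with $M\le\tilde N-1$ all check out, and finiteness of $\mbox{def}_p(H)$ does give $M<\infty$ in the infinite case. This is a mild variant of the paper's argument, which instead converts each term $t^{p^i}$ into a linear term via $pt^{p^i}\le t^{p^{i-1}}$ on $[0,p^{-1/(p-1)}]$ to get the upper bound $1-(p^ke+1)t$, and treats infinite presentations by a separate head-and-tail estimate; your geometric-series bound handles both cases uniformly, which is fine.

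The genuine gap is the final normality upgrade. You take $N\unlhd_{p^l}G$ inside $H$ and assert that $N$ is GS because ``being Golod--Shafarevich is inherited by finite-index subgroups.'' That assertion is proved nowhere in the paper and is not a routine fact for abstract groups: to exhibit a GS presentation of a finite-index subgroup one must control the degrees (in the Zassenhaus filtration of the new free group) of the Reidemeister--Schreier rewritten relators, and nothing in your argument, or in the general theory you can safely cite here, provides that control for an arbitrary finite-index subgroup; indeed the whole point of the paper's quantitative work is to sidestep exactly this. The repair is easy and is what the paper does: since $G/N$ is a finite $p$-group, $N$ may be included in a series of $G$ with factors of order $p$ (the theorem quoted in Section 1), so $N$ is itself reached from $G$ by a chain of index-$p$ steps of length $l\ge k$; running your estimate along this chain gives $\mbox{def}_p(N)\ge 1+p^l\varepsilon$ with a Puchta-rewritten presentation on $p^l(|X|-1)+1$ generators, and since your bound only improves as the index grows, $N$ is GS directly. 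State the conclusion that way rather than via finite-index heredity of the GS property.
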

\begin{proof}
First let us deal with finitely presented Puchta groups. Suppose
that 
\[G=\langle x_1,\ldots ,x_d| r_1^{p^{m_1}}, \ldots ,r_n^{p^{m_n}}\rangle
\mbox{ with Puchta polynomial } 
f(t)=1-dt+\sum_{i=1}^n t^{p^{m_i}}.\] We use a very similar trick as before,
which is that for $t\in[0,p^{-1/(p-1)}]$ we have $pt^p\leq t$ and
$pt^{p^k}\leq t^{p^{k-1}}$ for all $k\geq 1$. This time we 
``convert'' our terms
of the form $t^{p^i}$ into linear terms which increases the function on the
above range. Suppose that the $p$-deficiency of the presentation is $1+e$
so the excess $p$-deficiency $e$ is strictly positive. We end up with
the function $g(t)=1-(1+e)t$ which is negative for $t>1/(1+e)$.
Consequently we have that $G$ is GS if $p^{-1/(p-1)}>1/(1+e)$.

However $e$ can be arbitrarily small. But we know $G$ has a normal subgroup 
$H$ of index $p$, so on Puchta rewriting we
obtain $p(d-1)+1$ generators for $H$ and the relator $r_i^{p^{m_i}}$
yields either one relator to the power $p^{m_i-1}$ or $p$ relators to
the power $p^{m_i}$. In particular the contribution of $r_i^{p^{m_i}}$
to the Puchta polynomial for $H$ is bounded above by either $t^{p^{m_i-1}}$
or $pt^{p^{m_i}}$ and it is the former which is bigger on our range.
On again converting our terms to create a linear function, we have
an upper bound for this polynomial
of the form $1-(pe+1)t$. Repeating this process $k$ times
which produces a subnormal subgroup $H_k$ in $G$ of index $p^k$, we have
that the Puchta polynomial for $H_k$, and hence the GS polynomial,
is bounded above by $1-(p^ke+1)t$ on
$[0,p^{-1/(p-1)}]$. Thus we conclude that $H_k$ is a GS group once
$p^{-1/(p-1)}>1/(p^ke+1)$ and this will happen for large enough $k$.
Moreover we can assume that $H_k$ contains $N\unlhd G$ with index $p^l$
as mentioned in Section 1, 
and as $l\geq k$ the conclusion holds for $N$ too.

As for the case where $G$ is infinitely presented with its Golod-Shafarevich
function bounded above by 
$f(t)=1-dt+\sum_{i=1}^\infty t^{p^{m_i}}$, for any $N\in\N$ let the tail
$T_N(t)$ of $f$ be $\sum_{i=N+1}^\infty t^{p^{m_i}}$. Given $\epsilon>0$,
we can take $N$ large enough that $T_N(t)\leq\epsilon$ for all
$t\in[0,p^{-1/(p-1)}]$. This is because $t^{p^k}\leq 1/p^k$ for all $k\geq 1$ 
(and for $k=0$ too) on the bigger interval $[0,p^{-1/p}]$ because $n^{1/n}$ is 
decreasing in $n$. Thus here $T_N(t)\leq \sum_{i=N+1}^\infty 1/p^{m_i}$, so 
the right hand side can be taken to be $\epsilon$, and this tends to
zero as the $p$-deficiency is convergent.

If $G$ has $p$-deficiency $1+e$ as before, 
we now take $\epsilon<ep^{-1/(p-1)}/(1-p^{-1/(p-1)})$ so that
$\epsilon/(e+\epsilon)<p^{-1/(p-1)}$. We have that the GS function for
$G$ is bounded above by
\[f(t)=1-dt+\sum_{i=1}^N t^{p^{m_i}}+\sum_{i=N+1}^\infty t^{p^{m_i}}.\]
Let $r=\sum_{i=1}^N 1/p^{m_i}$ so that the $p$-deficiency $1+e=d-r-\epsilon$.
Then if we take a subnormal subgroup $H_k$ of index $p^k$ as before and look at
the contribution of the first $N$
relators of $G$ to the GS function of $H_k$, we have
as before an upper bound of $1-(p^k(d-r-1)+1)t=1-(p^k(e+\epsilon)+1)t$ for
$t\leq p^{-1/(p-1)}$. But in the tail, a relator $r_i^{p^{m_i}}$ of 
the given presentation for $G$
contributes to the GS function of a normal index $p$ subgroup $H$
at most either $t^{p^{m_i-1}}$ or $pt^{p^{m_i}}$ and both of
these are bounded by $1/p^{m_i-1}$ on our region. Consequently if we define
the tail $T_H$ for $H$ as those terms arising from the tail of $G$, we
have $T_H(t)\leq p\epsilon$ for allowable $t$. Hence the contribution of the
tail to the GS function of $H_k$ is at most $p^k\epsilon$, giving an
upper bound to the whole function of $p^k\epsilon+1-(p^k(e+\epsilon)+1)t$.
This is less than zero for $t>(p^k\epsilon+1)/(p^k(e+\epsilon)+1)$ which tends
to $\epsilon/(e+\epsilon)$ as $k\rightarrow\infty$. Thus there are points
below the axis in $[0,p^{-1/(p-1)}]$ for all large $k$, so again we can
assume that $H_k$ is normal in $G$.
\end{proof}  

\begin{rem*} In general the GS function of an infinite presentation satisfying 
the GS inequality need not have radius of convergence 1, for instance
$f(t)=1-dt+4t^2+8t^3+16t^4+\ldots$ converges only for $|t|<1/2$ and this
is GS for $d>6$. But an offshoot of the above proof is that
this is not true for Puchta groups.
\end{rem*}

\begin{cor} If $f$ is the GS function of a Puchta presentation with
infinitely many relators then $f$ has radius of convergence 1.
\end{cor}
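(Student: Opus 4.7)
The plan is to bound the radius of convergence $R$ of $f$ from both sides. For $R \le 1$, the Puchta presentation has infinitely many relators, and only finitely many of them can fail to be proper $p$-th powers (else $\sum p^{-\nu_p(r_i)}$ diverges, contradicting finite $\mbox{def}_p$); in particular the coefficients of $H_R(t)$ form an infinite multiset of positive integers. Thus $H_R(1) = \infty$ and $f$ diverges at $t = 1$, so $R \le 1$.

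For the nontrivial direction $R \ge 1$, I would reuse the comparison trick that runs through Section~5. Writing the relators as $w_i^{p^{r_i}}$ (the finitely many non-$p$th-power relators contribute only finitely many terms and can be set aside), the inequality $\mbox{deg}(w_i^{p^{r_i}}) \ge p^{r_i}$ from the remark before the previous question yields the termwise bound $t^{\mbox{deg}(w_i^{p^{r_i}})} \le t^{p^{r_i}}$ for $t \in (0,1)$. Hence it suffices to prove that the Puchta function $g(t) = 1 - |X|t + \sum_i t^{p^{r_i}}$ converges on $(0,1)$. Finiteness of $\mbox{def}_p$ gives $\sum_i p^{-r_i} < \infty$, forcing $r_i \to \infty$ and therefore $p^{r_i}/r_i \to \infty$. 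Fixing $t \in (0,1)$ and setting $s = \log_p(1/t) > 0$, we have $t^{p^{r_i}} = p^{-sp^{r_i}}$, which is at most $p^{-r_i}$ as soon as $sp^{r_i} \ge r_i$, a condition satisfied for all sufficiently large $i$. So, up to finitely many terms, $\sum_i t^{p^{r_i}}$ is dominated by the convergent series $\sum_i p^{-r_i}$; absolute convergence on $(-1,0)$ follows from the same bound applied to $|t|$. Thus $R \ge 1$.

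There is no serious obstacle here: as the preceding remark hints, the Puchta structure constrains the nonzero coefficients of $f$ to live on the sparse set of degrees $\{p^{r_i} \mbox{deg}(w_i)\}$ with $r_i \to \infty$, ruling out the exponential coefficient growth exhibited by examples such as $1 - dt + 4t^2 + 8t^3 + 16t^4 + \cdots$. All the analytic work is carried by the single inequality $\sum_i p^{-r_i} < \infty$ built into the definition of $p$-deficiency, matched against the doubly exponential growth of the exponents $p^{r_i}$ appearing inside the series.
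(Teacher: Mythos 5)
Your proposal is correct and follows essentially the same route as the paper: divergence at $t=1$ from the infinitely many relators, reduction to the Puchta function via $\mbox{deg}(w^{p^{r_i}})\ge p^{r_i}$, and domination of the tail by the convergent sum $\sum_i p^{-r_i}$ coming from finite $p$-deficiency. The only cosmetic difference is that you fix $t\in(0,1)$ and find the index beyond which $t^{p^{r_i}}\le p^{-r_i}$, whereas the paper fixes a tail index and lets the interval $[0,p^{-l/p^l}]$ grow towards $[0,1)$; the underlying comparison is identical.
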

\begin{proof} 
We do have $f(1)=\infty$, so we show convergence when $t<1$ and do this for
the Puchta function $g(t)=1-dt+\sum_{i=1}^{\infty}t^{p^{m_i}}$ of the
presentation. We saw above that $t^{p^k}\leq 1/p^k$ for all $k\ge 0$ 
on $[0,p^{-1/p}]$ which implies that $\sum_{i=1}^{\infty}t^{p^{m_i}}
\le \sum_{i=1}^{\infty}1/p^{m_i}$ which converges. But now given
$l\ge 1$, take $N$ such that $m_i\ge l$ for all $i\ge N$. Then for any
$k\ge l$ we have $t^{p^k}\le 1/p^k$ on $[0,p^{-l/p^l}]$ and so
$\sum_{i=N}^{\infty}t^{p^{m_i}}\le \sum_{i=N}^{\infty}1/p^{m_i}$. As
$(p^l)^{-1/{p^l}}$ tends to 1 as $l$ tends to infinity, we have convergence 
on $[0,1)$.
\end{proof}

The power of these results is that they produce a wide range of
Golod-Shafarevich group presentations, both finite and infinite, where
the GS condition is confirmed merely by calculating the $p$-deficiency
rather than having to deal with GS functions. Indeed it is straightforward
to produce for any $p$ a finitely generated $p$-group with an infinite
presentation which is both Puchta and GS. By Corollary \ref{pnot} this
group does not have property (T). Now although GS groups need not have
property (T), for instance non abelian free groups are GS, we do not
know of any explicit \emph{torsion} GS groups in the literature until now
that are proven not to have (T). Recently examples of torsion GS groups with
(T) were found and this will be mentioned further in the next section. 

We finish with a result on finitely presented Puchta groups.
\begin{thm}
Every finitely presented Puchta group $G$ has a strongly Golod-Shafarevich
subgroup $H$ of finite index. In addition, $H$ can be taken to be of
$p$th power index and subnormal, or even normal, in $G$.
\end{thm}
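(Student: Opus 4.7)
The plan is to adapt the approach of Theorem \ref{rmingp}, building a descending chain of $G$-normal subgroups by iterated Puchta rewriting and verifying the strongly Golod--Shafarevich inequality \eqref{eq:GS} directly from the resulting presentation at large depth. Write $d=|X|$ and $e = \text{def}_p(G)-1 > 0$; multiplicativity of $\text{def}_p-1$ then ensures $\text{def}_p = 1 + p^k e$ along any subnormal chain of $p^k$-index starting at $G$.

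First I would inductively construct $G = H_0 \trianglerighteq H_1 \trianglerighteq H_2 \trianglerighteq \cdots$ with each $H_i \trianglelefteq G$ and $[H_{i-1}:H_i]=p$. Given $H_{i-1}\trianglelefteq G$, inequality \eqref{eq:KeyD} gives $d_p(H_{i-1}) \ge \text{def}_p(H_{i-1}) = 1 + p^{i-1}e > 1$, so $H_{i-1}/H_{i-1}'H_{i-1}^p$ is a nontrivial $\mathbb{F}_p[G/H_{i-1}]$-module. Since $G/H_{i-1}$ is a finite $p$-group, the standard fixed-point theorem for $p$-group actions on $p$-torsion modules produces a $G$-invariant $1$-dimensional quotient, equivalently a $G$-invariant codimension-$1$ subspace whose preimage is the desired $H_i \trianglelefteq G$. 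Iterated Puchta rewriting along this chain then produces a finite presentation $\langle Y_k \mid S_k\rangle$ of $H_k$ with $|Y_k| = 1 + (d-1)p^k$ and $\text{def}_p(H_k;Y_k,S_k) = 1+p^k e$; since Puchta rewriting only collapses conjugate copies of relators produced by plain Reidemeister--Schreier rather than adding new ones, $|S_k| \le p^k|R|$, whence
\[
\text{def}(H_k;Y_k,S_k) \;\ge\; 1 + p^k\bigl(\text{def}(G;X,R)-1\bigr).
\]

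To verify \eqref{eq:GS} for $H_k$, set $D := d_p(H_k) \ge 1+p^k e$. The function $D \mapsto D - D^2/4$ is decreasing for $D \ge 2$, so once $k$ is large enough that $1+p^k e \ge 2$ one has $D - D^2/4 \le (1+p^k e)-(1+p^k e)^2/4$; combined with the deficiency bound above, the inequality \eqref{eq:GS} reduces to
\[
\frac{(1+p^k e)^2}{4} \;>\; p^k\bigl(1 + e - \text{def}(G;X,R)\bigr),
\]
whose left-hand side grows quadratically in $p^k$ while the right-hand side grows only linearly, so it holds for all sufficiently large $k$. Setting $H := H_k$ for such a $k$ then delivers the required strongly Golod--Shafarevich subgroup, normal (and hence subnormal) of $p^k$-index in $G$. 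The main obstacle is producing the $G$-normal chain with each step of index exactly $p$; once the fixed-point argument secures this, the strongly Golod--Shafarevich inequality falls out from the quadratic term $d_p^2/4$ dominating every linear quantity in sight.
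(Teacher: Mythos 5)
Your proposal is correct and follows essentially the same route as the paper: iterated Puchta rewriting along an index-$p$ chain, the bound $\text{def}(H_k)\ge 1+p^k(\text{def}(G)-1)$ from Reidemeister--Schreier, $d_p(H_k)\ge 1+p^k\varepsilon$ from \eqref{eq:KeyD}, and the observation that the quadratic term $d_p^2/4$ eventually dominates, so \eqref{eq:GS} holds for large $k$. The only (harmless) divergence is how normality in $G$ is secured: you build it in at each step via the fixed-point theorem for the $p$-group $G/H_{i-1}$ acting on $H_{i-1}/H_{i-1}'H_{i-1}^p$, whereas the paper takes an arbitrary subnormal chain and then extends it until a term normal in $G$ is reached, using the fact from Section 1 that a subnormal subgroup of $p$-power index contains a normal subgroup of $p$-power index.
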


\begin{proof}
We prove that there exists $H\unlhd_{p^N}G$, for some $N\in\mathbb{N}$,
satisfying (\ref{eq:GS}).

Let us say that $G$ has a presentation $\langle X|R\rangle$
with $d$ generators and $r$ relators. So $\text{def}(G;X,R)=d-r$ and we set 
$\text{def}_{p}(G)-1=\varepsilon>0$ and $k$ (which in general will be
negative) to be $d-r-1$.
As in Section 2, we consider any $p$-series\[
\ldots\trianglelefteq_{p}G_{n}\trianglelefteq_{p}G_{n-1}
\trianglelefteq_{p}\ldots\trianglelefteq_{p}G_{1}\trianglelefteq_{p}G_{0}=G\]
and consider the presentation of $G_n$ given by successive Puchta
rewriting. If we were merely to use the Reidemeister-Schreier
procedure, we would have $\text{def}(G_{n})-1\ =\ p^{n}(\text{def}(G)-1)$
but Puchta rewriting always gives a deficiency at least as big, so we have
$\mbox{def}(G_n)\ge p^nk+1$ for these Puchta presentations.

From \eqref{eq:KeyD} we have 
$\text{d}_{p}(G_n)\ge\text{def}_{p}(G_n)=p^n\epsilon +1$.
When $p^n\epsilon\ge 2$ this gives us 
\[4\mbox{def}(G_n)+(\mbox{d}_p(G_n))^2-4\mbox{d}_p(G_n)
=4\mbox{def}(G_n)+(\mbox{d}_p(G_n)-1)^2-4
\ge 4+4p^nk+(p^n\epsilon -2)^2-4\]
which is quadratic in $p^n$ with leading term $\epsilon^2p^{2n}$.
Thus we have strongly Golod-Shafarevich presentations for all but
finitely many $G_n$, and from any subnormal $G_n$ we can continue
the series so that $G_N$ is normal in $G$ for some $N\ge n$. 
\end{proof}

\section{Infinite quotients which have (T) or are amenable}

Amenability is an important and very robust property: it is preserved under
subgroups, quotients, extensions and directed unions. Moreover any group
containing $F_2$ cannot be amenable. The first counterexamples to the converse
were the torsion groups constructed by Adjan-Novikov and Olshanski\u{\i}.
However the Grigorchuk and Gupta-Sidki examples are amenable.

We can also ask whether there exist residually finite groups not containing
$F_2$ which are non amenable (if one exists then the directed union property
means there will be a finitely generated non amenable subgroup which will
still be residually finite). This was completely open until \cite{ers1}
by Ershov 
which showed that there exist GS groups for sufficiently large primes $p$
with property (T). These will have a $p$-quotient which is still GS (hence
infinite) and with (T), hence non amenable. Although this does not ensure
residual finiteness, the image of such a group in its pro-$p$ completion
(which is infinite) is a dense residually finite-$p$ group, thus this 
image is infinite and has (T), as (T) is preserved by quotients (as well as
extensions and finite index subgroups).

Shortly afterwards, other constructions of finitely generated residually
finite torsion groups which are non amenable were simultaneously but
independently given by Osin in \cite{osrg} and Schlage-Puchta in \cite{key-19},
and these methods are able to produce $p$-groups for all primes $p$. In fact
once we have existence of such torsion groups, we find many other groups
have these as quotients.
\begin{prop}
Every large finitely generated group $G$ has a non amenable residually
finite torsion quotient, and every $p$-large group $G$ has a
non amenable residually finite $p$-quotient.
\end{prop}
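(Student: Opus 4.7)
The plan is to re-run the construction of Corollary~\ref{impq}, but feeding in a non amenable residually finite $p$-group (which exists for every prime $p$ by Osin \cite{osrg} or Schlage-Puchta \cite{key-19}, as discussed in the paragraph just preceding the statement) in place of a just infinite $p$-group.

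For the $p$-large case, take $N\unlhd_{p^k} G$ with $N\twoheadrightarrow F_2$, and let $Q$ be an $n$-generator non amenable residually finite $p$-group for some $n$. Exactly as in Corollary~\ref{impq}, first replace $N$ by a characteristic subgroup of the form $N'N^{p^j}$ (still normal in $G$ and still of $p$-power index in $G$) with $j$ large enough that this subgroup surjects onto $F_n$, hence onto $Q$; call this subgroup $N$ again. Let $L\unlhd N$ with $N/L\cong Q$, and let $L_1,\ldots,L_m$ be the $G$-conjugates of $L$ (finite in number, because $\mathrm{Norm}_G(L)\supseteq N$ has finite index in $G$). Put $M=\bigcap_i L_i\unlhd G$. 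Then $N/M$ injects as a subdirect product into $\prod_i N/L_i\cong Q^m$; in particular it surjects onto $Q$ and is therefore non amenable. As $G/N$ is a $p$-group and $N/M$ embeds in $Q^m$ which is a $p$-group, $G/M$ is a $p$-group. Residual finiteness of $G/M$ follows verbatim from the argument in Corollary~\ref{impq} (separate any $x\in G\setminus M$ either by a finite quotient of the finite group $G/N$ if $x\notin N$, or using residual finiteness of $N/L_i\cong Q$ after conjugation if $x\in N\setminus M$). Non amenability of $G/M$ is inherited from the subgroup $N/M$.

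For the large case, run the identical argument with $N\unlhd_f G$ of arbitrary finite index and $Q$ taken to be any non amenable residually finite torsion group (e.g.\ a non amenable residually finite $p$-group for any fixed $p$). The resulting $G/M$ is an extension of the torsion group $N/M$ (a subdirect product of copies of $Q$) by the finite group $G/N$, hence torsion; it is residually finite by the same argument; and it contains the non amenable subgroup $N/M$, hence is non amenable. There is no substantial obstacle here, since Corollary~\ref{impq} and the cited existence results do all the real work; the only point specific to this proposition is the observation that the subdirect product construction preserves non amenability (because it surjects onto the non amenable $Q$), whereas Corollary~\ref{impq} exploited the stronger just-infinite hypothesis to control torsion.
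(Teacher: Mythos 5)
Your proposal is correct and takes essentially the same route as the paper: both pass to the (finite or $p$-power index) normal subgroup, boost it via a characteristic subgroup of the form $N'N^{q}$ so that it surjects onto a finitely generated non amenable residually finite torsion group (resp.\ $p$-group) of Osin/Schlage-Puchta type, and then intersect the finitely many $G$-conjugates of the kernel to get a normal subgroup whose quotient is a subdirect product, hence residually finite, torsion (resp.\ a $p$-group) and non amenable. The paper merely packages the steps you spell out as an appeal to the easier version of Proposition \ref{pmn} and the construction at the start of Corollary \ref{impq}.
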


\begin{proof}
As in the start of the proof of Corollary \ref{impq}, we can take 
$K\unlhd_f G$ (and of $p$-power index in the $p$-large case) with $K/L$
equal to such a torsion group (alternatively a $p$-group). Now 
being residually finite and being torsion are preserved under all subgroups,
finite index supergroups and finite direct products, therefore by the easier
version of Proposition \ref{pmn} mentioned after the proof, we have
$N\unlhd G$ with $G/N$ being infinite, residually finite and torsion
(and if $K/L$ is a $p$-group then so is $K/N$, being a subdirect product
of $K/L$). As $G/N$ contains $K/N$ which surjects onto $K/L$, we have that
$G/N$ is non amenable (and if $K/N$ is a $p$-group then so is $G/K$,
hence $G/N$ too).
\end{proof}

In both Osin's and Schlage-Puchta's arguments
the method used to show non amenability was that
of rank gradient: if $d(G)$ is the minimum number of generators of
the finitely generated group $G$ then the {\it rank gradient} 
$\mbox{RG}(G)$
is defined to be the infimum of $(d(H)-1)/[G:H]$ over all finite index
subgroups $H$ of $G$. If $\mbox{RG}(G)>0$ and $G$ is finitely presented then it
was shown in \cite{key-23} that $G$ is non amenable, and this was extended
to finitely generated groups in \cite{abnk}.

Schlage-Puchta showed positive rank gradient as follows: on taking a
Puchta presentation giving rise to an infinite $p$-group $G$, any finite
index subgroup $H$ of $G$ must be subnormal with index a power of $p$,
because $H$ must contain $N\unlhd_f G$ and $G/N$ is a finite $p$-group.
Now $d(H)-1\ge d_p(H)-1\ge\mbox{def}_p(H)-1$ and we have
$(\mbox{def}_p(H)-1)/[G:H]=(\mbox{def}_p(G)-1)$ which is strictly positive and
independent of $H$. Although we do not know if $G$ is necessarily
residually finite, the quotient $G/R_p$, where $R_p$ is the intersection
of all normal subgroups of $p$ power index, will be residually finite and will
also have positive rank gradient, because any homomorphism from $H$ to
$(C_p)^n$ must factor through $R_p$ and $[G/R_p:H/R_p]=[G:H]$.
We can use this to obtain:
\begin{prop}\label{pnam}
All Puchta groups are non amenable.
\end{prop}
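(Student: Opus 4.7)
The plan is to reduce the general Puchta case to Schlage-Puchta's setting of an infinite finitely generated $p$-group with $\mathrm{def}_p>1$, whose non-amenability was recalled in the paragraph immediately above. The reduction is effected by constructing an appropriate $p$-group quotient $G'$ of $G$ that inherits $\mathrm{def}_p(G')>1$, and then invoking the preservation of non-amenability under quotients.

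Fix a Puchta presentation $\langle X\mid R\rangle$ for $G$ with $\epsilon:=\mathrm{def}_p(G)-1>0$. Enumerate the non-identity elements of $F(X)$ as $w_1,w_2,\ldots$ and choose positive integers $n_1,n_2,\ldots$ satisfying $\sum_{i\ge1}p^{-n_i}<\epsilon$; this is clearly possible. Set
\[
G':=\langle X\mid R,\,w_1^{p^{n_1}},w_2^{p^{n_2}},\ldots\rangle,
\]
so $G'$ is a quotient of $G$. Every non-identity element of $F(X)$ is killed by some $p$-power in $G'$, so $G'$ is a $p$-group. Since $\nu_p(w_i^{p^{n_i}})\ge n_i$ for every $i$, the added relators contribute at most $\sum p^{-n_i}<\epsilon$ to the sum in the definition of $p$-deficiency, so $\mathrm{def}_p(G')>1$; hence $G'$ is infinite by Corollary~\ref{coinf}.

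Therefore $G'$ is precisely an instance of Schlage-Puchta's construction: an infinite, finitely generated $p$-group with $p$-deficiency greater than one. The preceding paragraph then applies verbatim to $G'$. Since $G'$ is a $p$-group, every finite index subgroup $H\le G'$ is automatically subnormal of $p$-power index; multiplicativity of $p$-deficiency under Puchta rewriting together with \eqref{eq:KeyD} yields $(d(H)-1)/[G':H]\ge\mathrm{def}_p(G')-1>0$, so the rank gradient $\mathrm{RG}(G')$ is strictly positive. (If residual finiteness is required one passes to $G'/R_p$ as indicated in the previous paragraph.) Lackenby's theorem \cite{key-23}, extended to finitely generated groups by Abert-Nikolov \cite{abnk}, then forces $G'$ to be non-amenable. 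As $G$ surjects onto $G'$ and amenability is preserved under quotients, $G$ itself is non-amenable.

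The essential content of the plan is the construction of $G'$: the strictly positive excess $\epsilon$ of the $p$-deficiency of $G$ is enough to absorb a sufficiently rapidly decreasing sequence $\sum p^{-n_i}<\epsilon$ of $p$-power relations that simultaneously force $G'$ to be a $p$-group and preserve $\mathrm{def}_p(G')>1$. There is no serious obstacle beyond this observation, since once $G'$ is in place the proposition reduces to Schlage-Puchta's theorem for the quotient.
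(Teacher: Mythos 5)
Your proposal is correct and follows essentially the same route as the paper: add sufficiently high $p$th powers of all non-identity words of $F(X)$ so that the extra contribution $\sum p^{-n_i}$ stays below the excess $p$-deficiency, obtain a $p$-group quotient with $\mathrm{def}_p>1$ and hence positive rank gradient by Schlage-Puchta's argument, conclude non-amenability via Lackenby/Ab\'ert--Nikolov, and pull back to $G$ since amenability passes to quotients. You have merely made explicit the quantitative choice of exponents that the paper leaves implicit.
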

\begin{proof}
If $G$ is a group with a presentation $\langle X|R\rangle$
having $p$-deficiency strictly
greater than 1 then $G$ need not be a $p$-group, but one can add very
high $p$th powers of every word in $F(X)$ whilst still keeping the
$p$-deficiency higher than 1. Now we have a $p$-quotient of $G$ which
must have positive rank gradient and so be non amenable, meaning that
$G$ is too.
\end{proof}

To return to GS groups, in \cite{osrg} it is asked whether
torsion GS groups have strictly positive rank gradient. By taking a Puchta
presentation yielding a $p$ group and using Theorems \ref{nallp} and
\ref{rmingp}, we see that there must exist GS $p$-groups with strictly
positive rank gradient for every prime $p$.

However there exist GS groups that are not Puchta; indeed
if we take Ershov's examples of GS groups with (T) then they cannot even
be commensurable with a Puchta group by Corollary \ref{pnot}, in contrast
to the other way round as proved in the last section. 
But a major theorem of Ershov in \cite{ersit}, proved a little while
after the above results, is that
every GS group has an infinite quotient with property (T). This 
immediately established that all GS groups are non amenable, which was
open until then. It also shows that every Puchta group has an infinite 
quotient with (T) because we know from the last section
that any Puchta group either is GS or has a finite index subgroup which is GS, 
and in the
latter case a finite index subgroup having an infinite quotient with (T)
implies the whole group does, by Proposition \ref{pmn} 
(which in this setting is Proposition 4.5 in \cite{ersit}). 
So this gives an alternative proof that Puchta groups are non amenable,
although a much harder one.

We might ask whether there exist torsion groups which have an infinite
quotient with (T) and another infinite quotient that is amenable. They do
exist: for instance take an example of each, such as a torsion GS group
along with the Grigorchuk or a Gupta-Sidki group (which are themselves
infinite amenable $p$-groups) and form the direct product. (We can
even obtain residually finite examples by making the factors residually
finite.) However
here we can obtain explicit examples which are both Puchta and GS.
\begin{cor} \label{infat}
For any prime $p$ there exists a $p$-group which is both
Puchta and GS and which has both an infinite amenable quotient and an
infinite quotient with (T).
\end{cor}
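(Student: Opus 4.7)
The plan is to build an explicit finitely generated $p$-group $G$ via a Schlage-Puchta-style infinite presentation on three generators, engineered so that $G$ both surjects onto a prescribed infinite amenable $p$-group and has its Golod-Shafarevich function visibly negative at the single value $t = 1/2$; Ershov's theorem on infinite (T) quotients of GS groups, recalled earlier in this section, will then supply the (T) quotient for free.

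Fix an infinite amenable residually finite $p$-group $A$: the Grigorchuk group when $p=2$, and a Gupta-Sidki group for $p$ odd, as in the discussion above. Since $A$ is $2$-generated, choose a surjection $\rho \colon F_3 = F(x_1,x_2,x_3) \twoheadrightarrow A$ sending $x_3 \mapsto 1$. For each non-identity $w \in F_3$, let $n(w) \ge 0$ be the least integer with $\rho(w^{p^{n(w)}}) = 1$; this exists because $A$ is a $p$-group. Enumerate the non-identity elements of $F_3$ as $w_1, w_2, \dots$ and pick $m_i \ge 1$ large enough (for instance so that $p^{-(n(w_i)+m_i)} < 1/(2i^{2})$) to force $\sum_i p^{-(n(w_i)+m_i)} < 1/2$. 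Writing $k_i := n(w_i)+m_i$, define
\[G := \langle x_1,x_2,x_3 \mid w_i^{p^{k_i}} : i \ge 1 \rangle.\]
Then $G$ is a $p$-group (every non-identity word in $F_3$ has a $p$-power among the relators), is Puchta (with $\text{def}_p(G) \ge 3 - 1/2 > 1$), and surjects onto $A$, since every relator lies in $\ker\rho$ and so $\rho$ factors through $G$.

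For the GS verification, use $\mbox{deg}(w_i^{p^{k_i}}) \ge p^{k_i}$ combined with the elementary inequality $2^{p^k} \ge p^k$, valid for every prime $p$ and every $k \ge 1$. Evaluating the Golod-Shafarevich function at $t = 1/2$, one gets
\[F(1/2) \;\le\; -\tfrac{1}{2} + \sum_i (1/2)^{p^{k_i}} \;\le\; -\tfrac{1}{2} + \sum_i p^{-k_i} \;<\; 0,\]
so $G$ is GS. Ershov's theorem then supplies an infinite quotient of $G$ with property (T), which is automatically a $p$-group.

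The main obstacle is the GS step, because Theorem \ref{nallp} gives GS directly only for $p > 5$, so for $p \in \{2,3,5\}$ one cannot use it as a black box. This is circumvented by replacing the general Puchta-polynomial analysis with the single uniform estimate at $t = 1/2$ above, whose success rests only on $2^{p^k} \ge p^k$ and hence is indifferent to which prime $p$ has been chosen.
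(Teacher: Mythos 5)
Your proposal is correct and follows the same basic template as the paper's proof: impose on a free group the relators $w^{p^{k(w)}}$ for all non-identity words $w$, with exponents large enough that the presentation is Puchta and GS and that every relator dies in a fixed infinite amenable residually finite $p$-group (Grigorchuk or Gupta--Sidki), so that group is a quotient; then invoke Ershov's theorem for the infinite (T) quotient. Where you genuinely differ is in how the exponents and the GS condition are handled. The paper works with the standard generating sets of the target groups and leans on explicit order-versus-word-length estimates ($n_i\le |w_i|$ for Gupta--Sidki, and the analogous bound for the Grigorchuk group from de la Harpe's book) to name concrete exponent sequences ($N_i=i$, $i+1$, $i+3$) making the presentation Puchta and GS; the delicate point is that for $p\in\{2,3,5\}$ Theorem \ref{nallp} does not give GS from Puchta, so the GS check depends on those specific choices. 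You avoid both ingredients: you only use that each $\rho(w)$ has \emph{some} finite $p$-power order (no growth estimate needed), and you manufacture GS slack by adding a redundant third generator (so the linear term contributes $-3t$) and evaluating the GS function at the single point $t=1/2$, where $\deg(w^{p^k})\ge p^k$ and $2^{p^k}\ge p^k$ give $F(1/2)\le -\tfrac12+\sum_i p^{-k_i}<0$ uniformly in $p$. This is a cleaner, prime-independent verification, at the cost of being slightly less explicit about the exponents than the paper; both arguments also yield the surjection onto $A$ in the same way, since every relator lies in $\ker\rho$.

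One small repair: your parenthetical choice $p^{-(n(w_i)+m_i)}<1/(2i^2)$ only forces $\sum_i p^{-k_i}<\pi^2/12\approx 0.82$, which is not below $1/2$ and would spoil the estimate at $t=1/2$; replace it by, say, $p^{-k_i}<2^{-(i+1)}$ or $1/(4i^2)$. Since the governing requirement you actually use is just $\sum_i p^{-k_i}<1/2$, which such a choice clearly achieves, this is cosmetic and does not affect the argument. (The remark that the (T) quotient is a $p$-group is true but not needed for the statement.)
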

\begin{proof} Take an infinite amenable $p$-group, such as the Grigorchuk
or a Gupta-Sidki group, and a finite generating set $X$ for this group.
Enumerate the words of $F(X)-\{\mbox{id}\}$ as $w_1,w_2,\ldots$ and let 
$p^{n_i}$ be the order of $w_i$ in this group. Now choose $N_i$ such that
$N_i\geq n_i$ and big enough so that the presentation 
$G=\langle X|w_1^{p^{N_1}},w_2^{p^{N_2}},\ldots\rangle$  is both Puchta
and GS.

For instance in the case of the Gupta-Sidki groups when $p\ge 3$ with the
standard generators $X=\{a,t\}$, the paper \cite{key-GS} shows that $n_i$
is at most the word length $|w_i|$, so if we order the $w_i$ in order of
word length and then lexicographically, we have $n_i\le i$ so $N_i=i$ will
work to make $G$ Puchta, as well as GS for $p>3$ whereas $N_i=i+1$ will
do for $p=3$. As for the Grigorchuk group when $p=2$,  
we can take $X$ to be the standard generating set $\{a,b,c,d\}$ and 
$N_i=i+3$ will do,
as shown in \cite{dlha} Chapter VIII and associated
references. 

Thus we have an infinite quotient with (T) by Ershov's result,
but $G$ also surjects onto the relevant
Grigorchuk or Gupta-Sidki group because each relation 
$w_i^{p^{N_i}}=\mbox{id}$ holds in this group.
\end{proof}

We note also that as the Grigorchuk and Gupta-Sidki groups are residually
finite, $G$ has an infinite, residually finite, amenable quotient. This
quotient cannot have property ($\tau$), which is implied by (T) although
not vice versa, by \cite{lubk} Example 4.3.3. 
As ($\tau$) is preserved by quotients,
$G$ does not have ($\tau$) either.

In \cite{prd} a {}``large'' property was defined to be an abstract
group property preserved under finite index subgroups and supergroups,
as well as prequotients. The standard definition of largeness that we
use here was introduced in that paper and shown to be the most
restrictive {}``large'' property for the class of finitely generated
groups.

Other examples of {}``large'' properties are not having (T) or ($\tau$),
not being amenable, but also having an infinite quotient with (T) or
($\tau$) or which is amenable, by Theorem \ref{pmn}. It seems surprising
that the finitely generated $p$-groups in Corollary \ref{infat} with
such explicit and straightforward presentations have all six of these
{}``large'' properties (as well as there being examples that are
residually finite), but because the related Burnside problem is
unresolved, we do not know if there exist finitely \emph{presented}
torsion groups having the weakest non trivial {}``large'' property:
that of being infinite.
$\ $

\end{document}